\newcommand{\supr}{{\rm sup}}
\newcommand{\infi}{{\rm inf}}
\newcommand{\GF}{{\rm G}_{[0,1]}^{\rm QF}}
\newcommand{\MF}{{\rm MC}^{\rm QF}}
\newcommand{\M}{{\rm MC}}
\newcommand{\Pb}{{\rm {\bf P}}}
\newcommand{\F}{{\rm {\bf F}}}
\newcommand{\ar}{{\rm {\bf ar}}}
\newcommand{\Ab}{{\rm {\bf A}}}
\newcommand{\Sb}{{\rm {\bf S}}}
\author{Matthias Baaz\inst{1} and Anela Lolic\inst{2}}
\authorrunning{M. Baaz and A. Lolic}
\institute{
  Institut f\"{u}r Diskrete Mathematik und Geometrie 104\\
   Technische Universit\"at Wien\\
    Vienna, Austria\\
\email{baaz@logic.at}
\and
   Institut f\"{u}r Diskrete Mathematik und Geometrie 104\\
   Technische Universit\"at Wien\\
    Vienna, Austria\\
    \email{anela@logic.at}
 }
\title{First-Order Interpolation Derived from Propositional Interpolation}
\tikzset
{
    treenode/.style = {circle, draw=black, align=center},
    subtree/.style  = {isosceles triangle, draw=black, align=center, minimum height=0.5cm, minimum width=1cm, shape border rotate=90, anchor=north}
}
\begin{document}
\maketitle

\begin{abstract}
This paper develops a general methodology to connect propositional and first-order interpolation. In fact, the existence of suitable skolemizations and of Herbrand expansions together with a propositional interpolant suffice to construct a first-order interpolant. This methodology is realized for lattice-based finitely-valued logics, the top element representing true. It is shown that interpolation is decidable for these logics.
\end{abstract}

\keywords{Proof theory, Interpolation, Lattice-based many-valued logics, \\
Gödel logics}

\section{Introduction}
Ever since Craig's seminal paper on interpolation \cite{craig1957three}, interpolation properties have been recognized as important properties of logical systems. 
Recall that a logic $L$ has \emph{interpolation} if whenever $A \to B$ holds in $L$ there exists a formula $I$ in the common language of $A$ and $B$ such that $A \to I$ and $I \to B$ hold in $L$.

Propositional interpolation properties can be determined and classified with relative ease using the ground-breaking results of Maksimova cf. \cite{maksimova1979interpolation,maksimova1977craig,maksimova2000intuitionistic}. This approach is based on an algebraic analysis of the logic in question. In contrast first-order interpolation properties are notoriously hard to determine, even for logics where propositional interpolation is more or less obvious. For example it is unknown whether $\GF$ (first-order infinitely-valued G\"odel logic) interpolates (cf \cite{aguilera2017ten}) and even for $\MF$, the logic of constant domain Kripke frames of $3$ worlds with $2$ top worlds (an extension of MC), interpolation proofs are very hard cf. Ono \cite{ono1983model}. This situation is due to the lack of an adequate algebraization of non-classical first-order logics.

In this paper we present a proof theoretic methodology to reduce first-order interpolation to propositional interpolation:
\[ \left. \begin{array}{r} \mbox{existence of suitable skolemizations } + \\\mbox{existence of Herbrand expansions } + \\\mbox{propositional interpolance } \, \, \, \, \end{array} \right\} \to \begin{array}{c} \mbox{first-order}\\\mbox{interpolation.}\end{array} \]

The construction of the first-order interpolant from the propositional interpolant follows this procedure:
\begin{enumerate}
\item Develop a validity equivalent skolemization replacing all strong quantifiers (negative existential or positive universal quantifiers) in the valid formula $A \supset B$ to obtain the valid formula $A_1 \supset B_1$.

\item Construct a valid Herbrand expansion $A_2 \supset B_2$ for $A_1 \supset B_1$. Occurrences of $\exists x B(x)$ and $\forall x A(x)$ are replaced by suitable finite disjunctions $\bigvee B(t_i )$ and conjunctions $\bigwedge B(t_i )$, respectively.

\item Interpolate the propositionally valid formula $A_2 \supset B_2$ with the propositional interpolant $I^{*}$:
\[ A_2 \supset I^* \quad \mbox{and} \quad I^* \supset B_2 \]
are propositionally valid.

\item Reintroduce weak quantifiers to obtain valid formulas
\[ A_1 \supset I^* \quad \mbox{and} \quad I^* \supset B_1 . \]

\item Eliminate all function symbols and constants not in the common language of $A_1$ and $B_1$ by introducing suitable quantifiers in $I^*$ (note that no Skolem functions are in the common language, therefore they are eliminated). Let $I$ be the result.

\item $I$ is an interpolant for $A_1 \supset B_1$. $A_1 \supset I$ and $I \supset B_1$ are skolemizations of $A \supset I$ and $I \supset B$. Therefore $I$ is an interpolant of $A \supset B$.
\end{enumerate}

We apply this methodology to lattice based finitely-valued logics and the weak quantifier and subprenex fragments of infinitely-valued first-order G\"odel logic.

Note that finitely-valued first-order logics admit variants of Maehara's \\
Lemma and therefore interpolate if all truth values are quantifier free definable \cite{miyama1974interpolation}. For logics where not all truth-values are represented by quantifier-free formulas this argument does not hold, which explains the necessity of different interpolation arguments for e.g. $\MF$ (the result for $\MF$ is covered by our framework, cf. Example \ref{ex.running}). We provide a decision algorithm for the interpolation property for lattice based finitely-valued logics. 

Most results in interpolation are concerned with the question whether a given logic interpolates but not with the more general question, to check the minimal extensions with that property. Our framework allows for the calculation of the relevant first-order extensions, which is given by the calculation of the relevant propositional extensions. For classical logic we show in this way that the fragment with $\top, \land, \lor, \forall, \exists, \supset$ interpolates, see Example \ref{ex.10}.

\section{Lattice Based Finitely-Valued Logics}\label{sec.lattice}

\begin{definition}[signature, cf \cite{DBLP:journals/tcs/CintulaDM19}]
A signature with polarities $\mathcal{L}^\to$ (or simply signature) consists of a finite set $C_{\mathcal{L}^\to}$ of symbols (called connectives), where each connective $c$ has an assigned arity $n_c \in \mathbb{N}$ and polarity $p_c$: $\{1, 2, \ldots , n_c \} \to \{-, +\}$. 
It is called lattice-oriented if $C_{\mathcal{L}^\to}$ contains three binary connectives $\lor$, $\land$ and $\to$ with $p_{\lor}(i) = p_{\land}(i) = +$ for $i \in \{1, 2\}$ and $p_{\to}(1) = -$ and $p_{\to}(2) = +$.
\end{definition}

\begin{definition}[$\mathcal{L}^\to$-lattice] \label{def.latticecond}
Given any lattice-oriented signature $\mathcal{L}^\to$, a finite $\mathcal{L}^\to$-lattice is an algebraic structure $\langle L, \{c^L\}_{c \in C_{\mathcal{L}^\to}} \rangle$ satisfying
\begin{enumerate}
	\item $\langle L, \lor^L, \land^L \rangle$ is a lattice with an order defined by $x \leq^L y \leftrightarrow x \land^L y = x$.
	\item $c^L$ is an $n_c$-ary operator on $L$ for each $c \in C_{\mathcal{L}^\to}$ such that for $1 \leq i \leq n_c$,
	\begin{enumerate}
		\item if $p_c(i) = +$, then $c^L$ is monotone in the $i$-th argument
		\item if $p_c(i) = -$, then $c^L$ is antitone in the $i$-th argument.
	\end{enumerate}
	\item $1 \leq^L A \to B$ iff $A \leq^L B$.
\end{enumerate}
\end{definition}
We abbreviate $A \to^L B \land^L B \to^L A$ with $A \leftrightarrow^L B$. We write $\models_0 A$ for $1 \leq A$ for all elements ($A$ is valid) and $A_1, \ldots , A_n \models_0 A'$ for $\models_0 A_1 \land^L \ldots \land^L A_n \to^L A'$. The logic $\mathbf{L^0}(\mathcal{L}^\to)$ is defined as the set of valid sentences $A$. The monotony and antitony of connectives is iterated as usual. 

\begin{proposition}\label{prop.b}
For all logics the following hold
\begin{enumerate}
\item $\models A \to A$,
\item If $\models B$ then $\models A \to B$,
\item If $\models A \to B$ and $\models C \to D$ then $\models (B \to C) \supset (A \to D)$.
\end{enumerate}
\end{proposition}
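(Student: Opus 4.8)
The plan is to prove each of the three claims directly from the lattice conditions in Definition \ref{def.latticecond}, reading the metalevel arrow $\to$ as the object-level connective $\to^L$ and the turnstile $\models$ as $1 \leq^L (\cdot)$. Throughout I would exploit condition (3), namely $1 \leq^L A \to^L B$ iff $A \leq^L B$, which lets me pass freely between validity of an implication and the lattice order on its components. I would also use the monotony/antitony clauses of condition (2): $\to^L$ is antitone in its first argument and monotone in its second.

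For claim (1), I would simply observe that $A \leq^L A$ holds by reflexivity of the lattice order, and then apply condition (3) to conclude $1 \leq^L A \to^L A$, i.e. $\models A \to A$. For claim (2), from the hypothesis $\models B$ I have $1 \leq^L B$. Since $1$ is the top element (the largest element, as $A \to^L B$ witnessing $A \leq^L B$ forces $1$ to dominate), every element $A$ satisfies $A \leq^L B$, whence by condition (3) again $1 \leq^L A \to^L B$, that is $\models A \to B$. The only subtlety here is justifying that $1$ is the top element; I would derive this from condition (3) by noting $1 \leq^L (x \to^L 1)$ reduces to $x \leq^L 1$, and more directly that $1 \leq^L B$ together with transitivity gives $A \leq^L B$ for arbitrary $A$ once $1$ is known maximal, so I should first record that $x \leq^L 1$ for all $x$.

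For claim (3), which is the substantive one, I would start from the hypotheses $\models A \to B$ and $\models C \to D$, i.e. $A \leq^L B$ and $C \leq^L D$ by condition (3). The goal $\models (B \to C) \supset (A \to D)$ unfolds, again via condition (3), to $B \to^L C \leq^L A \to^L D$. Here I would use the polarities of $\to^L$: it is antitone in the first coordinate and monotone in the second. Since $A \leq^L B$, antitony in the first argument yields $B \to^L C \leq^L A \to^L C$; since $C \leq^L D$, monotony in the second argument yields $A \to^L C \leq^L A \to^L D$. Chaining these by transitivity of $\leq^L$ gives $B \to^L C \leq^L A \to^L D$, which is exactly what condition (3) converts back into the desired validity statement.

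The main obstacle I anticipate is bookkeeping about the two arrow symbols: the metatheoretic $\supset$ used in the statement versus the object-level lattice operation $\to^L$, together with confirming that $1$ genuinely behaves as a top element so that claim (2) goes through. Once the identification of $\models A \to B$ with $A \leq^L B$ is fixed and the monotony/antitony directions of $\to^L$ are pinned down, each part is a one- or two-line order-theoretic computation; there is no deep content beyond correctly applying condition (3) and the polarity clauses.
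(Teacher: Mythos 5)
Your proof is correct, and it is essentially the argument the paper intends: the paper states Proposition \ref{prop.b} without any proof, treating all three parts as immediate from Definition \ref{def.latticecond}, and your reduction of each part via condition (3) --- to reflexivity of $\leq$, to $A \leq 1$, and to the antitone/monotone polarity clauses of $\to$ plus transitivity --- is exactly that verification. Your reading of $\supset$ in part (3) as the same object-level implication as $\to$ is also right; the paper uses the two symbols interchangeably. One concrete caveat: your parenthetical attempt to \emph{derive} that $1$ is the top element from condition (3) is circular --- the equivalence $1 \leq (x \to 1) \Leftrightarrow x \leq 1$ merely restates what is to be shown --- and in fact topness of $1$ does not follow from conditions (1)--(3) at all. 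For example, on the chain $0 < 1 < 2$ with $a \to b = 2$ if $a \leq b$ and $a \to b = 0$ otherwise, all three conditions hold, $1$ is not the top, and part (2) genuinely fails: $B = x \lor (x \to y)$ is valid there, yet with $A = z$ the valuation $z \mapsto 2$, $x \mapsto 1$, $y \mapsto 0$ gives $v(B) = 1 < v(A)$, so $\not\models A \to B$. Hence $x \leq 1$ for all $x$ must be taken as part of the framework rather than proved; this is indeed the paper's convention (the abstract fixes these as lattices whose top element, denoted $1$, represents truth). With that fact recorded as definitional rather than derived, your three computations go through exactly as written.
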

Let $L' \subseteq L$ and $\mathcal{D}_{L'} = \{c_i \ | \ c_i \mbox{ constant with value } i \in L'\}$. $\mathcal{L}^\to(\mathcal{D}_{L'})$ is a lattice with extended signature by $\mathcal{D}_{L'}$ where $c_i$ has value $i$.

We will omit $L$ from the connectives and $\leq$ when the semantical context is obvious.
\begin{definition} 
Let $L$ be the domain of $\mathcal{L}^\to$. A function $f: L^n \to L$ is representable if there is a word $w(x_1, \ldots, x_n)$ such that $w(d_1, \ldots, d_n)$ evaluates to $d$ if $f(d_1, \ldots, d_n) = d$. 

Let $V(\mathcal{L}^\to)$ be the set of values of constant functions of $\mathcal{L}^\to$.
\end{definition}

\begin{proposition}\label{prop.c}
\mbox{ }
\begin{enumerate}
	\item If $V(\mathcal{L}^\to) = \emptyset$ then $\mathcal{L}^\to$ does not admit the interpolation property.
	\item If $V(\mathcal{L}^\to) = L$ then $\mathcal{L}^\to$ admits the interpolation property.
\end{enumerate}
\end{proposition}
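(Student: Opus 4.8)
The plan is to treat the two items separately, using throughout the fact that, by condition~3 of Definition~\ref{def.latticecond}, validity of an implication decodes into a pointwise inequality: $\models_0 A \to B$ holds iff $v(A) \le v(B)$ for every valuation $v$. For a given implication $A \to B$ I would partition the propositional variables into those common to $A$ and $B$ (write them $\bar p$), those private to $A$ (write them $\bar q$), and those private to $B$ (write them $\bar r$). Since $A$ does not mention $\bar r$ and $B$ does not mention $\bar q$, validity of $A \to B$ is equivalent to $A(\bar p,\bar q) \le B(\bar p,\bar r)$ for all independent choices of $\bar p,\bar q,\bar r$, and hence, taking $\supr$ over $\bar q$ and $\infi$ over $\bar r$ in the finite lattice $L$, to $\supr_{\bar q} A(\bar p,\bar q) \le \infi_{\bar r} B(\bar p,\bar r)$ for every $\bar p$. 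The semantic interpolant is the function $\bar p \mapsto \supr_{\bar q} A(\bar p,\bar q)$, and the whole question is whether it can be named by a word in the common language.

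For item~2, assume $V(\mathcal{L}^\to) = L$, so every value $i \in L$ is the value of a variable-free word $c_i$. I would realize the semantic interpolant syntactically by substituting constants for the private variables $\bar q$ of $A$ and joining:
\[ I \; := \; \bigvee_{\bar b \in L^{|\bar q|}} A(\bar p, c_{\bar b}). \]
This word contains only the common variables $\bar p$ together with (closed) constants, so it lies in the common language of $A$ and $B$. Evaluating, $v(I) = \supr_{\bar q} A(\bar p,\bar q)$ for every valuation, because $\bar b$ ranges over all value-tuples. The inequality $v(A) \le v(I)$ is then immediate, since the actual value of $\bar q$ is among the disjuncts, which gives $\models_0 A \to I$; and $v(I) = \supr_{\bar q} A(\bar p,\bar q) \le \infi_{\bar r} B(\bar p,\bar r) \le v(B)$ by the decoded validity, which gives $\models_0 I \to B$. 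Hence $I$ interpolates. Finiteness of $L$ is exactly what makes the join finite and the suprema exist.

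For item~1, assume $V(\mathcal{L}^\to) = \emptyset$. The key observation is that then there is no variable-free word at all: any closed word would evaluate to some fixed value $d$, whence $d \in V(\mathcal{L}^\to)$, contradicting $V(\mathcal{L}^\to)=\emptyset$ (in particular the signature has no nullary connective). I would exhibit a single valid implication with no interpolant. Take distinct variables $p \ne q$ and set $A := p \to p$ and $B := q \to q$. Since $\models_0 (q \to q)$, Proposition~\ref{prop.b}(2) yields $\models_0 (p \to p) \to (q \to q)$, i.e. $\models_0 A \to B$. The variables of $A$ and $B$ are disjoint, so the common language is variable-free and any interpolant would have to be a closed word; as none exists, there is no interpolant, and $\mathcal{L}^\to$ fails interpolation.

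I expect item~2 to be the routine direction: beyond the evaluation identity for $I$, the only thing to verify is that the substituted word genuinely lies in the common language, which is clear once the $c_i$ are variable-free. The conceptual crux is item~1, namely pinning down that $V(\mathcal{L}^\to)=\emptyset$ truly excludes \emph{every} closed word, so that the common language of two variable-disjoint formulas is empty, together with checking that the witnessing pair $A \to B$ is valid yet admits no interpolant. All remaining steps are the pointwise $\supr$/$\infi$ manipulations licensed by condition~3 and the finiteness of $L$.
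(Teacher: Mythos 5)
Your proposal is correct and follows essentially the same route as the paper: for item~2 the interpolant $\bigvee_{\bar b} A(\bar p, c_{\bar b})$ obtained by substituting constants for the $A$-private variables and joining is exactly the paper's interpolant, and for item~1 your witness $(p \to p) \to (q \to q)$ plays the same role as the paper's $x \leq (y \to y)$ --- a valid implication with no shared variables whose only possible interpolant is a closed word, of which none exists when $V(\mathcal{L}^\to) = \emptyset$.
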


\begin{proof}
\mbox{ }
\begin{enumerate}
	\item $x \leq (y \to y)$ has as only possible interpolant a closed word denoting $1$. 
	\item Consider $a(x_1, \ldots, x_n) \leq b$ valid with left variables $x_1, \ldots , x_n$.\\
	\[ I = \bigvee_{\langle v_{i_1}, \ldots v_{i_n} \rangle \in V(\mathcal{L}^\to) \times F(\mathcal{L}^\to)} a(v_{i_1}, \ldots , v_{i_n}) \]
	is an interpolant as $a(x_1 , \ldots ,x_n) \leq I$ and $I \leq b$.
\end{enumerate}
\end{proof}
Suppose that $\langle L, \lor, \land, \to, \&, 0, 1\rangle$ is an $\mathcal{L}^\to$ lattice. If $\langle L, \&, \overline{1}\rangle$ is a commutative monoid and $\to$ is the residuum of $\&$ ($x \& y \leq z \leftrightarrow x \leq y \to z$ for all $x,y,z \in L$), then $L$ is a commutative pointed residual lattice. Note that $p_{\&}(i) = +$ for $i \in \{1, 2\}$.
\begin{proposition}
The condition on implication given by Definition \ref{def.latticecond} $3$ implies the definition of implication by residuation.
\end{proposition}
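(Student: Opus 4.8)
The plan is to derive the residuation law $x \& y \leq z \iff x \leq y \to z$ from condition~$3$ of Definition~\ref{def.latticecond}, using as standing data that $\langle L, \&, \overline{1}\rangle$ is a commutative monoid with monotone product ($p_{\&}(i)=+$) and that $\to$ is antitone in its first and monotone in its second argument. My first step is to reduce the biconditional to two pointwise inequalities, asserted for all $x,y,z \in L$: the counit (modus ponens) inequality $(y \to z) \& y \leq z$ and the unit inequality $x \leq y \to (x \& y)$. Granting these, residuation is immediate in both directions. From $x \leq y \to z$, monotonicity of $\&$ in its first argument yields $x \& y \leq (y \to z) \& y \leq z$; from $x \& y \leq z$, monotonicity of $\to$ in its second argument yields $x \leq y \to (x \& y) \leq y \to z$. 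Hence the entire proposition rests on producing those two inequalities.

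The next step is to extract precisely what condition~$3$ supplies. It says exactly that the top fibre of $\to$ computes the order: $A \leq B$ holds iff $A \to B = 1$. I would first use this to identify the monoid unit with the lattice top, $\overline{1} = 1$; this is the point at which the pointed unit must be aligned with the designated value, and it forces integrality, $x \& y \leq x \land y$ and in particular $x \& y \leq y$. Integrality is indispensable here, since it is the only bridge condition~$3$ offers between the product $\&$ and the lattice order, and without it neither adjunction inequality can hold.

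The main obstacle is that condition~$3$ controls $\to$ only on its top fibre: it converts an assertion $A \leq B$ into the equation $A \to B = 1$, but is silent about the intermediate values of $\to$, whereas the counit and unit inequalities range over all elements and turn precisely on those intermediate values. The substantive task is therefore to characterise $y \to z$ as the greatest element whose product with $y$ remains below $z$, and the crux is the counit inequality $(y \to z) \& y \leq z$ --- equivalently, by condition~$3$, the identity $((y \to z) \& y) \to z = 1$. I expect to have to carry this step with the monoid laws rather than with condition~$3$ in isolation: using commutativity, associativity and monotonicity of $\&$, integrality, and the finiteness of $L$ to guarantee that the relevant joins $\bigvee\{x : x \& y \leq z\}$ are attained, I would show that $y \to z$ both lies in and dominates $\{x : x \& y \leq z\}$. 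Securing the counit inequality and its companion unit inequality from the monoid structure, against the fact that condition~$3$ by itself fixes only the top fibre of $\to$, is where I expect the real difficulty to lie; once both are in hand, the reduction of the first paragraph delivers residuation.
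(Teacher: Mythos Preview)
You have read the proposition literally and set out to derive residuation from condition~$3$, but a glance at the paper's own proof shows that the opposite direction is what is actually established: the proof begins ``The definition of implication by residuation implies Definition~\ref{def.latticecond}~$3$'' and then verifies, using $1 \& A = A$, the two entailments $1 \leq A \to B \Rightarrow A \leq B$ and $A \leq B \Rightarrow 1 \leq A \to B$. The Remark immediately following the proposition makes this explicit: condition~$3$ is \emph{more general}, and the subsequent Example exhibits a four-element lattice with an implication satisfying condition~$3$ that is not the residuum of any monoid operation. So the proposition's wording is simply infelicitous; what is meant (and proved) is that residuation implies condition~$3$, not the converse.

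Consequently your programme cannot succeed. You correctly isolate the obstruction --- condition~$3$ pins down $\to$ only on its top fibre and says nothing about intermediate values --- but you treat this as a difficulty to be overcome with monoid laws and finiteness rather than as a genuine counterexample generator. In particular, your hoped-for step of showing $(y \to z)\& y \leq z$ from condition~$3$ alone (plus monoid axioms and finiteness) is exactly what the paper's four-element example refutes: there the implication is the ``drastic'' one ($1$ if $u \leq v$, else $0$), condition~$3$ holds, yet no commutative monoid $\&$ on the lattice makes $\to$ its residuum. Your attempt to force $\overline{1} = 1$ from condition~$3$ is also unwarranted; the paper simply assumes the monoid unit is the top element when it writes $1 \& A = A$. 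The fix is to reverse the direction: assume residuation, take $x = 1$, and read off condition~$3$ in two lines, exactly as the paper does.
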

\begin{proof}
The definition of implication by residuation implies Definition \ref{def.latticecond} $3$:
$$ 1 \leq A \to B \quad \Rightarrow \quad 1 \& A \leq B \quad \Rightarrow \quad A \leq B,$$
$$ A \leq B \quad \Rightarrow \quad 1 \& A \leq B \quad \Rightarrow \quad 1 \leq A \to B.$$
\end{proof}
\begin{remark}
Note that the condition on implication given by Definition \ref{def.latticecond} $3$ is more general, i.e. there are implications fulfilling (iii) which do  not correspond to a residuation of any monoid.
\end{remark}
\begin{example}
Consider $\langle \{0,u_1, u_2, 1\}, \lor, \land, \to \rangle$ given by

\begin{center}
\begin{tikzpicture}
\GraphInit[vstyle=Empty]
\Vertex[L=$1$,x=1,y=-2]{X}
\Vertex[L=$u_1$,x=-1,y=-4]{A}
\Vertex[L=$u_2$,x=3,y=-4]{B}
\Vertex[L=$0$,x=1,y=-6]{D}
\Edges[](A,X,B)
\Edges[](A,D,B)
\end{tikzpicture}
\end{center}
with 
\[ u \to v = \begin{cases} 1 \quad u \leq v \\ 0 \quad u \not\leq v \end{cases}\]
Then $\to$ is not given by residuation.

Assume on the contrary there were a monoid with $\&$ and $1$ on $\{u_1, u_2, 1\}$
\[\begin{array}{lclclclc}
u_1 \& u_2 = 0 & \Rightarrow & u_1 \leq u_2 \to 0 & \Rightarrow & u_1 \leq 0 & & & \mbox{ contradiction.} \\
u_1 \& u_2 = u_1 &\Rightarrow & u_1 \leq u_2 \to u_1 &\Rightarrow& u_1 \leq 0 & & & \mbox{ contradiction.}\\
u_1 \& u_2 = u_2 & \Rightarrow & u_2 \leq u_1 \to u_2 &\Rightarrow& u_2 \leq 0  & & & \mbox{ contradiction.}\\
u_1 \& u_2 = 1 & \Rightarrow & 1 \leq u_1 \& u_2, u_2 \leq 1 &\Rightarrow& 1 \leq u_1 \& 1 &\Rightarrow& 1 \leq u_1 & \mbox{ contradiction.}\\
\end{array}\]
\end{example}

\begin{proposition}
$F(\mathcal{L}^\to)$ is decidable. 
\end{proposition}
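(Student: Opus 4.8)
The plan is to read $F(\mathcal{L}^\to)$ as the set of \emph{representable} functions over the finite domain $L$, so that the claim amounts to this: given any $f\colon L^n\to L$ presented by its value table, we can effectively decide whether $f$ is representable, i.e.\ whether $f$ lies in the clone of term operations generated by the interpretations $c^L$ of the connectives. The engine of the proof is pure finiteness. Since $L$ is finite and $C_{\mathcal{L}^\to}$ is a finite set of connectives of fixed finite arity, for each fixed $n$ there are only $|L|^{|L|^{n}}$ functions $L^n\to L$ altogether; hence the set $F^{(n)}$ of representable $n$-ary functions is a finite object we can hope to compute outright.

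First I would reduce representability to the $n$-ary part of the generated clone. A word $w(x_1,\dots,x_n)$ denotes an $n$-ary function, and by induction on $w$ every subterm may itself be regarded as an $n$-ary function, simply ignoring the variables it does not mention. Consequently $F^{(n)}$ is precisely the least set of $n$-ary functions that contains the projections $\pi_i(x_1,\dots,x_n)=x_i$ and is closed under composition with the basic connectives. In particular no detour through functions of other arities is required, and this is exactly what makes a single finite saturation suffice.

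Next I would run that saturation. Put $S_0:=\{\pi_1,\dots,\pi_n\}$ and iterate
\[ S_{k+1}:=S_k\cup\bigl\{\,(\bar d)\mapsto c^L\!\bigl(f_1(\bar d),\dots,f_{n_c}(\bar d)\bigr)\;:\;c\in C_{\mathcal{L}^\to},\ f_1,\dots,f_{n_c}\in S_k\,\bigr\}, \]
where the nullary connectives ($n_c=0$) contribute the corresponding constant $n$-ary functions. Each $S_k$ is a subset of the finite set of all $n$-ary functions and the sequence is nondecreasing, so it stabilizes after at most $|L|^{|L|^{n}}$ steps at a set $F^{(n)}$, and stabilization is detected effectively by testing whether $S_{k+1}=S_k$. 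The induction indicated above shows that $F^{(n)}$ is exactly the set of representable $n$-ary functions, so membership of a given $f$ is decided by computing $F^{(n)}$ and checking $f\in F^{(n)}$. Run at arity $0$ the same procedure simultaneously yields the set $V(\mathcal{L}^\to)$ of representable constants.

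I do not expect a genuine obstacle: the statement is a standard clone-saturation fact whose entire content is the finiteness of the function space guaranteeing termination. The only points that need care are stating the composition-closure step so that the computed set is genuinely closed under substitution, and justifying the reduction to a fixed arity, namely that confining all intermediate subterms to $n$-ary functions loses none of the representable $n$-ary functions.
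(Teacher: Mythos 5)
Your proposal is correct and is essentially the paper's own proof: the paper's ``levels'' algorithm (start with the $n$ input columns and constant columns, repeatedly apply the connectives to all existing columns, stop when no new column appears) is exactly your clone saturation $S_0, S_1, \ldots$ with columns playing the role of $n$-ary function tables, terminating by finiteness of the function space. Your added care about the fixed-arity reduction and the bound $|L|^{|L|^n}$ on the number of rounds only makes explicit what the paper leaves implicit.
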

\begin{proof}
Use the following algorithm, which determines all functions with $\leq n$ variables for a finite set of finitely-valued matrices of $\leq m$ values:\\[1ex]
Level $0$: Start with the $n$ input columns of the lattice for an $n$-placed connective and constant columns for the constants.\\[1ex]
Level $n+1$: Apply the connectives to all existing columns in all possible ways and add columns if a new column occurs.\\[1ex]
This algorithm terminates in $\leq m^n$ rounds.
\end{proof}

\begin{example}
Values $= \{0, \frac{1}{2}, 1\}$
\setlength{\tabcolsep}{8pt}

\begin{minipage}{0.4\linewidth}
\begin{tabular}{ c | c c c }
$\to$ & $0$ & $\frac{1}{2}$ & $1$ \\[1ex]
\hline
$0$ & $1$ & $1$ & $1$  \\[1ex]
$\frac{1}{2}$ & $\frac{1}{2}$ & $1$ & $1$  \\[1ex]
$1$ & $0$ & $\frac{1}{2}$ & $1$
\end{tabular}
\end{minipage}
\begin{minipage}{0.4\linewidth}
\begin{tabular}{ c | c }
 & $\overline{0}$ \\[1ex]
\hline
$0$ & $0$ \\[1ex]
$\frac{1}{2}$ & $0$ \\[1ex]
$1$ & $0$
\end{tabular}
\end{minipage}

\bigskip

$0$ variables

\small
\begin{minipage}{0.45\linewidth}
Level $0$: $\left( \begin{array}{c}
0 \\                                              
0 \\
0                                         
\end{array}\right)$
\end{minipage}
\begin{minipage}{0.5\linewidth}
Level $1$: Level $0$ +
$\left( \begin{array}{c}
1 \\                                              
1 \\
1                                         
\end{array}\right)$
\end{minipage}
\normalsize

\bigskip

$\leq 1$ variables

\small
\begin{minipage}{0.45\linewidth}
Level $0$: 
$\left( \begin{array}{c}
0 \\                                              
\frac{1}{2} \\
1                                              
\end{array}\right)$ $\left( \begin{array}{c}
0 \\                                              
0 \\
0                                         
\end{array}\right)$
\end{minipage}
\begin{minipage}{0.5\linewidth}
Level $1$:
Level $0$ + 
$\left( \begin{array}{c}
1 \\                                              
1 \\
1                                              
\end{array}\right)$ $\left( \begin{array}{c}
1 \\                                              
\frac{1}{2} \\
0                                         
\end{array}\right)$
\end{minipage}

\begin{minipage}{0.45\linewidth}
Level $2$:
Level $1$ + 
$\left( \begin{array}{c}
1 \\                                              
1 \\
0                                              
\end{array}\right)$ $\left( \begin{array}{c}
0 \\                                              
1 \\
1                                         
\end{array}\right)$
\end{minipage}
\begin{minipage}{0.5\linewidth}
Level $3$:
Level $2$ + 
$\left( \begin{array}{c}
0 \\                                              
0 \\
1                                              
\end{array}\right)$ $\left( \begin{array}{c}
1 \\                                              
0 \\
0                                         
\end{array}\right)$ $\left( \begin{array}{c}
1 \\                                              
\frac{1}{2} \\
1                                         
\end{array}\right)$
\end{minipage}

\begin{minipage}{0.45\linewidth}
Level $4$:
Level $3$ + 
$\left( \begin{array}{c}
0 \\                                              
\frac{1}{2} \\
0                                              
\end{array}\right)$ $\left( \begin{array}{c}
1 \\                                              
0 \\
1                                         
\end{array}\right)$
\end{minipage}
\begin{minipage}{0.5\linewidth}
Level $5$:
Level $4$ + 
$\left( \begin{array}{c}
0 \\                                              
1 \\
0                                              
\end{array}\right)$
\end{minipage}

All functions $f(x)$ are representable, where
$$\langle f(0), f(\frac{1}{2}), f(1) \rangle \leq \langle \{0,1\}, \{0, \frac{1}{2}, 1\}, \{0,1\} \rangle. $$
\end{example}

\section{Interpolation for Finitely-Valued Lattice Based Logics is Decidable}

\begin{definition} $\mathcal{L}^\to$ has the interpolation property iff $a(\overline{x}, \overline{y}) \leq b(\overline{y}, \overline{z})$ valid implies $a(\overline{x}, \overline{y}) \leq i(\overline{y})$ and $i(\overline{y}) \leq b(\overline{y}, \overline{z})$ for an interpolant $i(\overline{y})$ (all variables are indicated). We call the variables occurring only in $a$ left variables, the variables occurring only in $b$ the right variables and the variables occurring in $a$ and in $b$ the intersection variables.

$\mathcal{L}^\to$ admits the Craig interpolation property $1 \leq a(\overline{x}, \overline{y}) \to b(\overline{y}, \overline{z})$ valid implies $1 \leq a(\overline{x}, \overline{y}) \to i(\overline{y})$ and $1 \leq i(\overline{y}) \to b(\overline{y}, \overline{z})$ for an interpolant $i(\overline{y})$ (all variables are indicated).
\end{definition}

\begin{proposition} A logic based on $\mathcal{L}^{\to}$ interpolates iff  $\mathcal{L}^{\to}$ interpolates.
\end{proposition}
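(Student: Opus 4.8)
The plan is to show that the two notions of interpolation in play — the logical one from the introduction (``whenever $A \to B$ holds in $L$ there is an $I$ in the common language with $A \to I$ and $I \to B$ holding'') and the algebraic one of the preceding definition — are literal notational variants of one another, the bridge being condition $3$ of Definition \ref{def.latticecond}. That condition, $1 \leq A \to B$ iff $A \leq B$, is exactly what converts validity of an implication into the order relation that the algebraic formulation speaks about, so once it is in hand the proof is a direct unwinding of definitions in both directions.

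First I would fix the translation between the two vocabularies. A propositional formula $A$ over the signature $\mathcal{L}^\to$ is nothing but a word $a(\overline{x})$ in its propositional variables, and the propositional variables common to $A$ and $B$ are exactly the intersection variables. For the validity side, ``$A \to B$ holds in the logic $\mathbf{L^0}(\mathcal{L}^\to)$'' means $\models_0 A \to B$, i.e.\ $1 \leq A \to B$ under every valuation; applying Definition \ref{def.latticecond}($3$) valuation-by-valuation turns this into $A \leq B$ under every valuation, which is precisely the algebraic statement that $a(\overline{x},\overline{y}) \leq b(\overline{y},\overline{z})$ is valid. The same rewriting applied to the two implications $A \to I$ and $I \to B$ identifies the logical interpolant conditions with the algebraic ones $a \leq i$ and $i \leq b$.

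With the dictionary in place, both directions are immediate. If the logic interpolates, take any algebraically valid $a(\overline{x},\overline{y}) \leq b(\overline{y},\overline{z})$, read it as $\models_0 A \to B$, obtain a logical interpolant $I$ whose variables lie in the common language, hence among $\overline{y}$, and read $\models_0 A \to I$ and $\models_0 I \to B$ back through Definition \ref{def.latticecond}($3$) as $a \leq i$ and $i \leq b$; so $\mathcal{L}^\to$ interpolates. The converse is the same chain run backwards, starting from an algebraic interpolant and recovering the logical one.

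I do not expect a genuine obstacle here: the entire content is the equivalence furnished by condition $3$, which simultaneously shows that the algebraic ``interpolation'' and ``Craig interpolation'' properties of the definition coincide, since $1 \leq a \to b$ and $a \leq b$ are interchangeable. The only points demanding care are bookkeeping ones — that the universal quantifier over valuations commutes through the biconditional of condition $3$, and that ``common language'' for formulas matches ``intersection variables'' for words — neither of which hides any difficulty.
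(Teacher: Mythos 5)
Your proof is correct, and it coincides with the paper's intent: the paper states this proposition without any proof, treating it as a definitional unwinding, and your argument — applying condition $3$ of Definition \ref{def.latticecond} valuation-by-valuation to translate $\models_0 A \to B$ into validity of $a \leq b$, and likewise for the two interpolant conditions — is exactly the justification being taken for granted. Your side remark that the same equivalence identifies the lattice's ``interpolation property'' with its ``Craig interpolation property'' is also accurate and consistent with how the paper uses the two notions interchangeably afterwards.
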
 

\begin{example}
$\mathcal{L}^\to = \langle \{0,1,a\}, \lor, \land, \to, \overline{0}, \overline{1} \rangle$, $\overline{0} = 0$, $\overline{1} = 1$, $0 < a$ and $a < 1$
\[ u \to v = \begin{cases} 1 \quad u \leq v \\ 0 \quad u = 1 \mbox{ and } v = 0 \\ a \quad \mbox{else} \end{cases}\]
$\mathbf{L^0} (\mathcal{L}^\to)$ does not interpolate as 
\[ \models^0 (x \land (x \to \overline{0})) \to (y \lor (y \to \overline{0})) \]
does not admit an interpolant, as the only possible interpolant is a constant with value $a$ (there are no common variables in the antecedent and the succedent).

Let $\mathcal{L}^\to = \langle \{0,1,a\}, \lor, \land, \to, \overline{0}, \overline{a} \rangle$, $\overline{0} = 0$, $\overline{a} = a$, $0 < a$ and $a < 1$
$\textbf{L}^0 (\mathcal{L}^\to)$ interpolates as all truth constants are representable, $1$ by $\overline{0} \to \overline{0}$ (c.f. Section \ref{maintheorem}).
\end{example}

\begin{example} \label{ex.running}
Finite propositional and constant-domain Kripke frames can be understood as lattice-based finitely valued logics: Consider upwards closed subsets $\Gamma \subseteq W$, $W$ is the set of worlds, and order them by inclusion. A formula $A$ is assigned the truth value $\Gamma$ iff $A$ is true at exactly the worlds in $\Gamma$.

The constant-domain intuitionistic Kripke frame $\mathcal{K}$ in Fig. \ref{constantdomainKripkeFrame} is represented by the lattice $\mathcal{L}^\to(\{ {1 \quad 1 \choose 1}, {1 \quad 1 \choose 0}, {0 \quad 1 \choose 0}, {1 \quad 0 \choose 0}, {0 \quad 0 \choose 0} \}, \lor, \land, \to,$ $\overline{ 0 \quad 0 \choose 0}$ $)$ in Fig. \ref{lattice}.

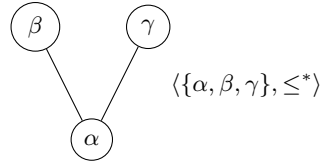
\begin{figure}
\centering
\begin{tikzpicture}
\node(z)[treenode]{$\alpha$}[grow=up]
  child { node[treenode]{$\gamma$} edge from parent node[left,draw=none] {$$} }
  child { node[treenode]{$\beta$} edge from parent node[right,draw=none] {$\qquad \qquad \langle \{\alpha, \beta, \gamma \}, \leq^* \rangle$}};
\end{tikzpicture}

\caption{Constant-domain intuitionistic Kripke frame $\mathcal{K}$.}
\label{constantdomainKripkeFrame}
\end{figure}
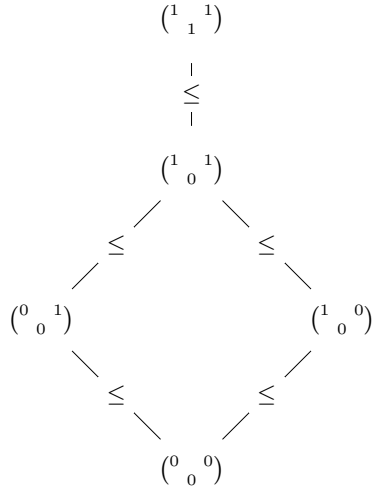
\begin{figure}
\centering
\begin{tikzpicture}
\GraphInit[vstyle=Empty]
\Vertex[L=$ 1 \quad 1 \choose 1$,x=1,y=0]{Y};
\Vertex[L=$ 1 \quad 1 \choose 0$,x=1,y=-2]{X}
\Vertex[L=$ 0 \quad 1 \choose 0$,x=-1,y=-4]{A}
\Vertex[L=$ 1 \quad 0 \choose 0$,x=3,y=-4]{B}
\Vertex[L=$ 0 \quad 0 \choose 0$,x=1,y=-6]{D}
\Edges[label=$\leq$](Y,X) 
\Edges[label=$\leq$](A,X,B)
\Edges[label=$\leq$](A,D,B)
\end{tikzpicture}
\caption{The lattice.}
\label{lattice}
\end{figure}

where
\[ u \to v = \begin{cases} 1 \quad u \leq v \\ v \quad \mbox{else} \end{cases}\]
$\M = \textbf{L}^0(\mathcal{L}^\to)$ is the set of valid propositional sentences.
\end{example}
Propositional interpolation is easily demonstrated for MC, one of the seven intermediate logics which admit propositional interpolation \cite{maksimova1977craig}. Previous proofs for the interpolation of $\MF$, the first-order variant of MC, are quite involved, \cite{ono1983model}. In fact, in Section \ref{maintheorem}, Example \ref{ex.interpolation} we will show that this interpolation result is a corollary of the main theorem of this paper.

\begin{proposition} \label{prop.two}
$$ (A(x_1, \ldots , x_n) \land \bigwedge_{i=1}^n x_i \leftrightarrow x'_i) \leftrightarrow A(x'_1, \ldots , x'_n) \land \bigwedge_{i=1}^n x_i \leftrightarrow x'_i. $$
\end{proposition}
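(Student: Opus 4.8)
The plan is to prove the asserted equivalence by reducing it to a replacement (congruence) statement and then arguing by induction on the structure of the word $A$. First I would record the consequence of Definition~\ref{def.latticecond}(3): since $1$ is the top element, $\models_0 P \leftrightarrow Q$ holds precisely when $1 \leq (P \to Q) \land (Q \to P)$, i.e. when $P \leq Q$ and $Q \leq P$, so that the biconditional is valid exactly when $P$ and $Q$ denote the same element under every assignment of lattice values to the variables. Writing $w := \bigwedge_{i=1}^n (x_i \leftrightarrow x'_i)$, it therefore suffices to show that $A(\overline{x}) \land w$ and $A(\overline{x'}) \land w$ evaluate to the same element under every assignment. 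As the whole expression is symmetric in the pair $(\overline{x}, \overline{x'})$ and $w$ is itself symmetric, it is enough to establish the single inequality $A(\overline{x}) \land w \leq A(\overline{x'})$ and then invoke symmetry.

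I would prove this by induction on the construction of $A$, in fact establishing the slightly stronger congruence $w \leq \bigl(B(\overline{x}) \leftrightarrow B(\overline{x'})\bigr)$ for every subword $B$. The base cases are immediate: for a constant $c$ one has $c \to c = 1 \geq w$, and for a variable $x_j$ the definition of the meet gives $w \leq (x_j \leftrightarrow x'_j) \leq (x_j \to x'_j)$ and symmetrically $w \leq (x'_j \to x_j)$. In the inductive step $B = c(B_1,\dots,B_{n_c})$ I would feed the inductive hypotheses $w \leq (B_j(\overline{x}) \leftrightarrow B_j(\overline{x'}))$ into the monotonicity and antitonicity clauses of Definition~\ref{def.latticecond}(2): for a positive argument one routes through $w \leq (B_j(\overline{x}) \to B_j(\overline{x'}))$, for a negative argument through the opposite implication, and Proposition~\ref{prop.b}(3) composes these one-step implications into the compound implication $w \leq (B(\overline{x}) \to B(\overline{x'}))$. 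Combining both directions yields the biconditional for $B$, and for the connectives $\land,\lor$ one uses distributivity of the concrete lattice (available for the Kripke-frame lattices such as the running example) to commute the side condition $w$ through the join and meet.

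The step I expect to carry the real weight is passing from the congruence $w \leq (A(\overline{x}) \leftrightarrow A(\overline{x'}))$ back to the meet equality $A(\overline{x}) \land w \leq A(\overline{x'})$, since this requires a modus-ponens-type interaction $p \land (p \to q) \leq q$ that is not forced by the lattice axioms alone but must be extracted from Definition~\ref{def.latticecond}(3) together with the monotonicity structure. Concretely the delicate case is already the base instance $x_j \land w \leq x'_j$: one knows $w \leq (x_j \to x'_j)$, and the argument must turn this bound on the implication into an actual meet bound on the values. This is where the specific behaviour of $\to$ enters, and it is the point at which the hypotheses on the lattice are genuinely used; the inductive bookkeeping over connectives, by contrast, is routine once this interaction is isolated. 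Having secured both inequalities, equality of the two meets follows, and hence the stated biconditional is valid.
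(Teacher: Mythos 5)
You correctly reduce validity of the biconditional to the claim that both sides take equal values under every valuation, and your instinct about where the weight lies is sound. But the two load-bearing steps you defer cannot be discharged, and this is a genuine gap, not a routine verification. (a) In the inductive step, monotonicity of $c^L$ (Definition~\ref{def.latticecond}(2)) turns \emph{value} inequalities $B_j(\overline{x}) \leq B_j(\overline{x'})$ into value inequalities for the compound; it does not turn the bound $w \leq (B_j(\overline{x}) \to B_j(\overline{x'}))$ into $w \leq (B(\overline{x}) \to B(\overline{x'}))$, because condition (3) extracts a value inequality from such a bound only when $w \geq 1$. Proposition~\ref{prop.b}(3) is about \emph{valid} implications and cannot compose sub-unit bounds either. (b) The modus-ponens interaction $p \land (p \to q) \leq q$ is not a consequence of Definition~\ref{def.latticecond} at all: take the paper's own three-valued example $L = \{0,a,1\}$, $0 < a < 1$, with $u \to v = 1$ if $u \leq v$, $= 0$ if $u = 1$ and $v = 0$, and $= a$ otherwise. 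This satisfies all conditions of Definition~\ref{def.latticecond}, yet $a \land (a \to 0) = a \not\leq 0$. In fact the proposition itself fails there: with $A(x) = x$ and the valuation $x \mapsto a$, $x' \mapsto 0$, one gets $x \land (x \leftrightarrow x') = a \land a = a$ while $x' \land (x \leftrightarrow x') = 0$, and $a \leftrightarrow 0 = a$, which is not $\geq 1$. So no completion of your argument can succeed at the stated level of generality; the statement needs extra hypotheses on $\to$. Your side remark that $\land,\lor$ are handled "using distributivity of the concrete lattice" is symptomatic of the same problem: the proposition quantifies over all $\mathcal{L}^\to$-lattices, and neither distributivity nor residuation is part of Definition~\ref{def.latticecond}.

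For comparison, the paper's entire proof is the one-line "by induction on the complexity of $A$", and it founders on exactly the same points; your write-up is essentially an honest expansion of that sketch, valuable precisely because it exposes the missing ingredient. Your induction does go through verbatim when $\to$ is the residuum of $\land$ (Heyting-type lattices, e.g.\ the Kripke-frame lattices of Example~\ref{ex.running}), since there both the congruence law $w \leq (B(\overline{x}) \leftrightarrow B(\overline{x'}))$ and $p \land (p \to q) \leq q$ hold. In the general setting only a weak reading survives: if a valuation makes $\bigwedge_{i=1}^n (x_i \leftrightarrow x'_i)$ take a value $\geq 1$, then by condition (3) it assigns $x_i$ and $x'_i$ equal values, hence $A(\overline{x})$ and $A(\overline{x'})$ equal values --- no induction and no modus ponens needed. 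That weak form is what the first application of the proposition inside Lemma~\ref{lem.3} actually uses (there one of the $C_\sigma$ evaluates to $1$), but it does not support the later step $\bigvee_\sigma (b\sigma \land C_\sigma) \leq \bigvee_\sigma (b \land C_\sigma)$, so the restriction would have to be tracked through that lemma as well.
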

\begin{proof}
By induction on the complexity of $A$.
\end{proof}

\begin{theorem} \label{th.one}
It is decidable if a given finite $\mathcal{L}^\to$ admits the interpolation property. 
\end{theorem}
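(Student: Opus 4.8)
The plan is to show decidability of the interpolation property by reducing it to a finite collection of effectively checkable conditions. Since $\mathcal{L}^\to$ has a finite domain $L$ with $|L|=m$, and since the language of connectives $C_{\mathcal{L}^\to}$ is finite, the key observation is that any word $a(\overline{x},\overline{y})$ and $b(\overline{y},\overline{z})$ over $\mathcal{L}^\to$ denotes a function $L^k \to L$, and there are only finitely many such functions. By the proposition earlier in the paper, $F(\mathcal{L}^\to)$ (the set of representable functions, together with $V(\mathcal{L}^\to)$, the representable constants) is decidable: the level-saturation algorithm enumerates all representable functions in at most $m^n$ rounds. So the crux is to reformulate ``$\mathcal{L}^\to$ interpolates'' as a statement quantifying only over these finitely many objects.

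First I would fix the shape of the interpolation problem. By Definition of the interpolation property, we must decide whether for every valid inequality $a(\overline{x},\overline{y}) \leq b(\overline{y},\overline{z})$ there is an interpolant $i(\overline{y})$ in the intersection variables $\overline{y}$ alone. The number of \emph{distinct functions} $a$ can induce on $L^{|\overline{x}|+|\overline{y}|}$ is bounded, and similarly for $b$; moreover, whether $a \leq b$ holds valid, and whether a candidate $i(\overline{y})$ satisfies $a \leq i$ and $i \leq b$, are all checkable by exhaustively evaluating over the finite domain $L$. The subtlety is that, a priori, the number of intersection variables $\overline{y}$ and the arities of $a,b$ are unbounded, so one cannot naively enumerate all pairs $(a,b)$. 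The plan is therefore to argue that interpolation reduces to a \emph{canonical} finite test: because the semantic content of $a(\overline{x},\overline{y}) \leq b(\overline{y},\overline{z})$ depends only on the induced functions, and an interpolant need only separate the function $\lambda \overline{y}.\supr_{\overline{x}} a(\overline{x},\overline{y})$ from $\lambda \overline{y}.\infi_{\overline{z}} b(\overline{y},\overline{z})$ (using lattice joins and meets over the free left/right arguments), the existence of an interpolant becomes a question about whether a specific ideal function on $L^{|\overline{y}|}$ is representable.

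The main technical step is this representability reduction. Given valid $a \leq b$, define the two bounding functions $g(\overline{y}) = \bigvee_{\overline{x}\in L} a(\overline{x},\overline{y})$ and $h(\overline{y}) = \bigwedge_{\overline{z}\in L} b(\overline{y},\overline{z})$; validity guarantees $g(\overline{y}) \leq h(\overline{y})$ pointwise. An interpolant $i(\overline{y})$ exists precisely when there is a representable function lying pointwise between $g$ and $h$. Since $g$ and $h$ are themselves computable from the tables of $a$ and $b$, and since representability of a given function table is decidable by the saturation algorithm, one can decide the existence of an interpolant for \emph{each fixed} interpolation instance. To bound the number of instances, I would use that the number of variables needed is bounded by the matrix size: by Proposition~\ref{prop.two}, identifying variables that take equal values does not change the denoted function modulo $\leftrightarrow$, so it suffices to consider at most $m$ distinct intersection variables and correspondingly bounded arities, yielding finitely many canonical candidate pairs $(g,h)$ to inspect.

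The hard part will be making the bound on the number of relevant variables rigorous: one must argue that if interpolation fails, it fails on an instance with a bounded number of variables, so that the universal quantifier over all valid inequalities collapses to a finite check. Here Proposition~\ref{prop.two} is the essential lever, since it lets us collapse repeated or redundant intersection variables and thereby cap the arity of the functions $g,h$ that need to be examined. Once that bound is in place, the decision procedure is: enumerate all pairs of representable functions $a,b$ up to the bounded arity, filter those with $a \leq b$ valid, and for each check whether some representable $i(\overline{y})$ interpolates; $\mathcal{L}^\to$ interpolates iff every such pair admits one. Since every enumeration and every check ranges over finite sets, the procedure terminates, establishing decidability.
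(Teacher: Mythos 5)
Your overall architecture matches the paper's: bound the number of variables, then reduce interpolation to a finite enumeration of representable functions with a per-instance check. Your per-instance check via the bounding functions $g(\overline{y}) = \bigvee_{\overline{x}} a(\overline{x},\overline{y})$ and $h(\overline{y}) = \bigwedge_{\overline{z}} b(\overline{y},\overline{z})$ is correct and is in fact a cleaner formulation of the step the paper states only loosely (``check whether there is a representable function whose representation might serve as interpolant''): an interpolant exists for a fixed instance iff some representable function of arity $|\overline{y}|$ lies pointwise between $g$ and $h$.

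However, there is a genuine gap, and you flagged it yourself: the claim that it suffices to consider boundedly many variables is precisely the hard content of the theorem, and your proposal does not prove it. The paper devotes three lemmas to this (Lemmas \ref{lem.1}, \ref{lem.2}, \ref{lem.3}), and Proposition \ref{prop.two} alone is not the lever you need. For left and right variables the paper does not use Proposition \ref{prop.two} at all; it uses a substitution trick: for every $\sigma \in \Sigma^n(X)$ collapsing the left variables into $\leq n$ classes, $a\sigma \leq b$ is valid, $a \leq \bigvee_{\sigma} a\sigma$ holds because any valuation into a domain of size $n$ identifies variables into $\leq n$ classes, and an interpolant for $(\bigvee_\sigma a\sigma) \leq b$ is one for $a \leq b$ (dually with conjunctions for right variables). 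Without these two lemmas even your enumeration of candidate pairs $(g,h)$ is not finite, since $g$ and $h$ range over suprema/infima of representable functions of unbounded left/right arity. For intersection variables the situation is worse: collapsing $\overline{y}$ changes the variables available to the interpolant, so an interpolant $I_\sigma$ for the collapsed instance $a\sigma \leq b\sigma$ is not an interpolant for $a \leq b$, and ``the denoted function is unchanged modulo $\leftrightarrow$'' does not bridge this. The paper's Lemma \ref{lem.3} supplies the missing construction: introduce the guard formulas $C_\sigma = \bigwedge_{x \in X} (x\sigma \to x) \land (x \to x\sigma)$, observe that under any valuation at least one $C_\sigma$ evaluates to $1$ (so $a \leq \bigvee_\sigma (a \land C_\sigma)$), use Proposition \ref{prop.two} to replace $a \land C_\sigma$ by $a\sigma \land C_\sigma$, and assemble the interpolant for the original instance as $\bigvee_{\sigma} (I_\sigma \land C_\sigma)$, which legitimately uses only intersection variables. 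This transfer construction is the key missing idea in your proposal; without it the reduction from arbitrarily many variables to at most $n = |L|$ of them is an assertion, not a proof.
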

This theorem follows from the following three lemmas.

\begin{definition}
Let $\pi^n$ be a partition of $X$ with $\leq n$ equivalence classes $E_i$ and let $x^{E_i}$ be a representative of $E_i$. $\sigma_{\pi^n}: X \to X$ be defined by $\sigma(x) = x^{E_i}$ for $x \in E_i$. Let $\Sigma^n(x)$ be the set of all such substitutions. 
\end{definition}

\begin{lemma}\label{lem.1}
$\mathcal{L}^\to$ with domain $A$, where $|A| \leq n$ admits the interpolation property iff it admits the interpolation property for all $a \leq b$ where the number of left variables is $\leq n$. 
\end{lemma}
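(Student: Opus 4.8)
The plan is to prove the contrapositive in the nontrivial direction. The forward direction is trivial: if $\mathcal{L}^\to$ interpolates for \emph{all} valid $a \leq b$, then in particular it interpolates for those with $\leq n$ left variables. So the content is the converse: assuming interpolation holds for all valid inequalities with at most $n$ left variables, I want to deduce it for an arbitrary valid $a(\overline{x}, \overline{y}) \leq b(\overline{y}, \overline{z})$, where $\overline{x}$ are the left variables, $\overline{y}$ the intersection variables, and $\overline{z}$ the right variables. The key observation is that $|A| \leq n$, so on any assignment the $|\overline{x}|$ left variables take values in a set of size $\leq n$; hence any assignment identifies two of the left variables whenever $|\overline{x}| > n$. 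This is exactly what the substitutions $\sigma_{\pi^n}$ from $\Sigma^n(\overline{x})$ are designed to capture.

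The main step is to reduce the number of left variables from arbitrarily many down to $\leq n$ by collapsing them along all partitions into $\leq n$ classes. First I would argue that
\[
a(\overline{x}, \overline{y}) \leq b(\overline{y}, \overline{z}) \quad\text{valid}\quad\Longrightarrow\quad a(\sigma_{\pi^n}(\overline{x}), \overline{y}) \leq b(\overline{y}, \overline{z}) \quad\text{valid for each } \sigma_{\pi^n} \in \Sigma^n(\overline{x}),
\]
since substituting variables is just a restriction to those assignments that are constant on each equivalence class, and validity is preserved under any substitution. Conversely, because every assignment to the left variables factors through some partition into $\leq n$ classes (two variables receiving the same value forces them into the same class), validity of $a(\overline{x}, \overline{y}) \leq b(\overline{y}, \overline{z})$ is \emph{equivalent} to the validity of all its collapses $a(\sigma_{\pi^n}(\overline{x}), \overline{y}) \leq b(\overline{y}, \overline{z})$ simultaneously. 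Here Proposition \ref{prop.two} is the technical workhorse: it lets me replace reasoning about equal values by substitution of variables, justifying that collapsing left variables along $\sigma_{\pi^n}$ loses no valid instances.

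Each collapsed inequality $a(\sigma_{\pi^n}(\overline{x}), \overline{y}) \leq b(\overline{y}, \overline{z})$ now has at most $n$ left variables (the representatives $x^{E_i}$), so by hypothesis it has an interpolant $i_{\pi^n}(\overline{y})$ in the intersection variables, with $a(\sigma_{\pi^n}(\overline{x}), \overline{y}) \leq i_{\pi^n}(\overline{y})$ and $i_{\pi^n}(\overline{y}) \leq b(\overline{y}, \overline{z})$. The natural candidate for a global interpolant is the join
\[
i(\overline{y}) = \bigvee_{\sigma_{\pi^n} \in \Sigma^n(\overline{x})} i_{\pi^n}(\overline{y}),
\]
which is a well-formed word in the intersection variables since $\Sigma^n(\overline{x})$ is finite. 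The right half $i(\overline{y}) \leq b(\overline{y}, \overline{z})$ follows because each disjunct satisfies it and $b$ is an upper bound of the join. For the left half $a(\overline{x}, \overline{y}) \leq i(\overline{y})$, the point is that under any given assignment the left variables realize some partition $\pi^n$, so $a(\overline{x}, \overline{y})$ agrees with $a(\sigma_{\pi^n}(\overline{x}), \overline{y})$ on that assignment (again via Proposition \ref{prop.two}), and the latter is $\leq i_{\pi^n}(\overline{y}) \leq i(\overline{y})$.

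The step I expect to be the main obstacle is the left inequality $a(\overline{x}, \overline{y}) \leq i(\overline{y})$, because it requires a pointwise argument: for each fixed assignment one must select the partition that matches the actual values of the left variables and then invoke agreement of $a$ with its collapse on \emph{that} assignment. Making this rigorous means establishing, from Proposition \ref{prop.two}, that whenever an assignment sends two left variables to equal values the value of $a$ is unchanged by identifying them — i.e. that $a$ respects the equivalence induced by the assignment — and then checking that the corresponding $i_{\pi^n}$ bounds $a$ at that point. Everything else (finiteness of $\Sigma^n(\overline{x})$, preservation of validity under substitution, and the upper-bound property of joins) is routine.
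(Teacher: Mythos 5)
Your proof is correct and follows essentially the same route as the paper's: collapse the left variables along all substitutions $\sigma \in \Sigma^n(\overline{x})$, use preservation of validity under substitution to obtain valid instances $a\sigma \leq b$ with at most $n$ left variables, note that $a \leq \bigvee_{\sigma} a\sigma$ holds pointwise because any valuation identifies the left variables into $\leq n$ classes, and combine. If anything you are more explicit than the paper at the one point that needs care: the paper invokes ``the interpolant for $\bigvee_{\sigma} a\sigma \leq b$'' without saying how the hypothesis (which applies only to the individual disjuncts) yields one, whereas you interpolate each $a\sigma \leq b$ separately and take the join $\bigvee_{\sigma} i_{\sigma}$ as the global interpolant --- exactly the device the paper spells out only in its proof of Lemma \ref{lem.3}.
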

\begin{proof}
Let $a \leq b$ be valid and let $X$ be the set of left variables. Then $a \sigma \leq b \sigma$ is valid and consequently $a \sigma \leq b$ is valid for all $\sigma \in \Sigma^n(X)$. Then $(\bigvee{\sigma \in \Sigma^n(X)} a) \leq b$ is valid. As $a \leq (\bigvee{\sigma \in \Sigma^n(X)} a)$ is valid because the number of classes of variables identified by any valuation is $\leq n$. Therefore the interpolant for $(\bigvee{\sigma \in \Sigma^n(X)} a) \leq b$ is an interpolant for $a \leq b$.
\end{proof}

\begin{lemma}\label{lem.2}
$\mathcal{L}^\to$ with domain $A$, where $|A| \leq n$ admits the interpolation property iff it admits the interpolation property for all $a \leq b$ where the number of right variables is $\leq n$. 
\end{lemma}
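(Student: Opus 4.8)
The plan is to run the argument of Lemma~\ref{lem.1} in its order-dual form, replacing the disjunction over substitutions of the left variables by a conjunction over substitutions of the right variables. The left-to-right direction is immediate, since interpolation for \emph{all} valid $a \leq b$ specialises to those with at most $n$ right variables; so the content is the converse, where I assume interpolation for all valid inequalities with $\leq n$ right variables and derive it in general.

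First I would fix an arbitrary valid $a \leq b$ and let $Y$ be its set of right variables. For every $\sigma \in \Sigma^n(Y)$ the substitution $\sigma$ leaves $a$ unchanged, as it acts only on variables absent from $a$, so validity of $a \leq b$ gives $a = a\sigma \leq b\sigma$. Each $b\sigma$ has at most $n$ right variables, namely the class representatives, while its intersection variables with $a$ are exactly those of the original pair, since $\sigma$ does not touch the intersection variables. Hence the hypothesis applies to each valid $a \leq b\sigma$ and yields an interpolant $i_\sigma$ over the intersection variables with $a \leq i_\sigma$ and $i_\sigma \leq b\sigma$. I would then set $i = \bigwedge_{\sigma \in \Sigma^n(Y)} i_\sigma$, which is again a formula over the intersection variables and which satisfies $a \leq i$ because $a \leq i_\sigma$ for every $\sigma$.

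It remains to obtain $i \leq b$, and this is where the dual of the key step of Lemma~\ref{lem.1} enters. Since $i \leq i_\sigma \leq b\sigma$ for every $\sigma$, I have $i \leq \bigwedge_{\sigma \in \Sigma^n(Y)} b\sigma$, so it suffices to show $\bigwedge_{\sigma \in \Sigma^n(Y)} b\sigma \leq b$. This holds because $|A| \leq n$: under any valuation $v$ the variables of $Y$ take at most $n$ distinct values, so the identification they induce is realised by some $\sigma \in \Sigma^n(Y)$ whose representatives carry the same values as the variables they replace, giving $v(b\sigma) = v(b)$ and hence $v\bigl(\bigwedge_\sigma b\sigma\bigr) \leq v(b)$. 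As $v$ is arbitrary, $\bigwedge_\sigma b\sigma \leq b$, so $i \leq b$ and $i$ is an interpolant for $a \leq b$. The one delicate point is exactly this inequality together with the bookkeeping that collapsing $Y$ keeps the number of right variables $\leq n$ and leaves the intersection variables unchanged; granted these, the argument is the order-dual of Lemma~\ref{lem.1}.
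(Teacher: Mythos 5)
Your proof is correct and takes essentially the same route as the paper's: it is the order-dual of Lemma~\ref{lem.1}, passing to the valid inequalities $a \leq b\sigma$ for $\sigma \in \Sigma^n(Y)$ and using that $\bigwedge_{\sigma} b\sigma \leq b$ holds because any valuation identifies the right variables into at most $n$ value classes. If anything, your bookkeeping is slightly more careful than the paper's wording: the paper speaks of an interpolant for $a \leq \bigwedge_{\sigma} b\sigma$ (an inequality which, read literally, may have more than $n$ right variables, since different $\sigma$'s use different representatives), whereas you apply the hypothesis to each $a \leq b\sigma$ separately and conjoin the resulting $i_\sigma$ --- which is the reading that makes the argument go through, and matches how the paper's own proof of Lemma~\ref{lem.3} proceeds.
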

\begin{proof}
Let $a \leq b$ be valid and let $X$ be the set of right variables. Then $a \sigma \leq b \sigma$ is valid and consequently $a \leq b \sigma$ is valid for all $\sigma \in \Sigma^n(X)$. Then $a \leq (\bigwedge_{\sigma \in \Sigma^n(X)} b)$ is valid. As $(\bigwedge_{\sigma \in \Sigma^n(X)} b) \leq b$ is valid because the number of classes of variables identified by any valuation is $\leq n$. Therefore the interpolant for $a \leq (\bigwedge_{\sigma \in \Sigma^n(X)} b)$ is an interpolant for $a \leq b$.
\end{proof}

\begin{lemma}\label{lem.3}
$\mathcal{L}^\to$ with domain $A$, where $|A| \leq n$ admits the interpolation property iff it admits the interpolation property for all $a \leq b$ where the number of intersection variables is $\leq n$. 
\end{lemma}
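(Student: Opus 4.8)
The plan is to mirror the strategy already used for Lemmas \ref{lem.1} and \ref{lem.2}, but now the bound must be imposed on the intersection variables, i.e. the variables $\overline{y}$ occurring in both $a$ and $b$. The forward direction (interpolation in general implies interpolation for the restricted class) is immediate, since the restricted instances are a special case. So the content is the converse: assuming interpolation holds whenever the number of intersection variables is $\leq n$, I would derive it for an arbitrary valid $a(\overline{x},\overline{y}) \leq b(\overline{y},\overline{z})$ with no bound on the number of intersection variables $\overline{y}$.

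First I would fix a valid instance $a \leq b$ and consider the set $Y$ of its intersection variables. For any substitution $\sigma \in \Sigma^n(Y)$ that identifies the intersection variables into $\leq n$ equivalence classes, Proposition \ref{prop.two} lets me replace $a \leq b$ by an equivalent statement in which identified variables are genuinely equal. The key semantic fact, exactly as in the earlier lemmas, is that in a lattice with $|A| \leq n$ any valuation can assign at most $n$ distinct values to the intersection variables, so it factors through some partition with $\leq n$ classes; hence $a \leq b$ is valid iff $a\sigma \leq b\sigma$ is valid for all $\sigma \in \Sigma^n(Y)$. The plan is then to apply the hypothesis to each $a\sigma \leq b\sigma$ — each of which has $\leq n$ intersection variables — to obtain interpolants $i_\sigma(\overline{y}\sigma)$ with $a\sigma \leq i_\sigma$ and $i_\sigma \leq b\sigma$, and to combine these finitely many interpolants (over the finitely many $\sigma \in \Sigma^n(Y)$) into a single interpolant $i(\overline{y})$ for the original $a \leq b$.

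The main obstacle, and the point where this lemma is genuinely harder than Lemmas \ref{lem.1} and \ref{lem.2}, is the combination step. In the earlier lemmas only \emph{one} side was restricted, so a single disjunction (for left variables) or conjunction (for right variables) sufficed and the combined interpolant lived trivially in the common language. Here the intersection variables themselves are being identified by $\sigma$, so each $i_\sigma$ speaks only about the collapsed variables $\overline{y}\sigma$, and I must reconstruct an interpolant in the full set $\overline{y}$. The idea is to guard each $i_\sigma$ by the equalities it assumes: using $\leftrightarrow$ to record which intersection variables $\sigma$ identifies, I would form something like $\bigvee_{\sigma} \bigl( i_\sigma \land \bigwedge_{x \in E} (x \leftrightarrow x^{E}) \bigr)$, so that on any given valuation the disjunct corresponding to the partition actually realized by that valuation is activated and reproduces the correct $i_\sigma$, while Proposition \ref{prop.two} guarantees the guards are compatible with pushing the equalities through $a$ and $b$. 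Verifying that this guarded disjunction still lies in the common language and sits correctly between $a$ and $b$ — i.e. $a \leq i$ and $i \leq b$ — is the delicate part, and it is where Proposition \ref{prop.two} and the monotonicity/antitonicity conditions of Definition \ref{def.latticecond} do the real work.

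Once the combined $i(\overline{y})$ is shown to satisfy $a(\overline{x},\overline{y}) \leq i(\overline{y})$ and $i(\overline{y}) \leq b(\overline{y},\overline{z})$, the lemma follows. Together with Lemmas \ref{lem.1} and \ref{lem.2}, this reduces checking the interpolation property to the finitely many instances with all three variable classes bounded by $n$, which is the decidable situation needed for Theorem \ref{th.one}.
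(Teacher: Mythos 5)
Your proposal matches the paper's proof essentially step for step: the paper also takes guards $C_\sigma = \bigwedge_{x \in X}(x\sigma \to x) \land (x \to x\sigma)$ for each $\sigma \in \Sigma^n(X)$, observes that under any valuation at least one $C_\sigma$ evaluates to $1$, uses Proposition \ref{prop.two} to pass between $a \land C_\sigma$ and $a\sigma \land C_\sigma$ (and likewise for $b$), and takes exactly your guarded disjunction $\bigvee_{\sigma}(I_\sigma \land C_\sigma)$ as the interpolant. The chain of inequalities you defer as ``the delicate part'' is precisely the sequence $a \leq \bigvee_\sigma (a \land C_\sigma) \leq \bigvee_\sigma (a\sigma \land C_\sigma) \leq \bigvee_\sigma (I_\sigma \land C_\sigma) \leq \bigvee_\sigma (b\sigma \land C_\sigma) \leq \bigvee_\sigma (b \land C_\sigma) \leq b$ that the paper writes out, so your plan is correct and is the same argument.
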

\begin{proof}
Let $a \leq b$ be valid and let $X$ be the set of intersection variables. For $\sigma \in \Sigma^n(X)$ let $C_{\sigma}$ be $\bigwedge_{x \in X} (x\sigma \to x) \land (x \to x\sigma)$.
Then 
$$a \leq \bigvee_{\sigma \in \Sigma^n(X)} (a \land C_{\sigma})$$
as under any valuation at least one of $C_{\sigma}$ is evaluated to $1$.
Therefore 
$$a \leq \bigvee_{\sigma \in \Sigma^n(X)} (a\sigma \land C_{\sigma})$$
by Proposition \ref{prop.two}.
Now $a \sigma \leq b \sigma$ for all $\sigma \in \Sigma^n(X)$, $|X\sigma| \leq n$.
In case there is always an interpolant $I_\sigma$ we obtain
$$\bigvee_{\sigma \in \Sigma^n(X)} (a\sigma \land C_{\sigma}) \leq \bigvee_{\sigma \in \Sigma^n(X)} (I_\sigma \land C_{\sigma})$$
and 
$$\bigvee_{\sigma \in \Sigma^n(X)} (I_\sigma \land C_{\sigma}) \leq \bigvee_{\sigma \in \Sigma^n(X)} (b \sigma \land C_{\sigma})$$ 
as $a\sigma \leq I_\sigma$ and $I_\sigma \leq b \sigma$ for all $\sigma \in \Sigma^n(X)$.
$$\bigvee_{\sigma \in \Sigma^n(X)} (b \sigma \land C_{\sigma}) \leq \bigvee_{\sigma \in \Sigma^n(X)} (b \land C_{\sigma})$$
again by Proposition \ref{prop.two}.

Finally, $\bigvee_{\sigma \in \Sigma^n(X)} (b \land C_{\sigma}) \leq b$. Therefore,  $\bigvee_{\sigma \in \Sigma^n(X)} I_\sigma \land C_\sigma$ is a suitable interpolant for $a \leq b$.
\end{proof}

\begin{proof}[Proof of Theorem \ref{th.one}]
By Lemma \ref{lem.1}, \ref{lem.2} and \ref{lem.3} the number of left variables, right variables and intersection variables is bound by $n$. Consider all pairs of words denoting the representable functions with the limitation of variable occurrences as above. $(a,b)$: check whether $a \leq b$ is valid. In case it is valid check whether there is a representable function whose representation might serve as interpolant.
\end{proof}

\begin{corollary}
It is decidable if a given finite $\mathcal{L}^\to$ admits the Craig interpolation property. 
\end{corollary}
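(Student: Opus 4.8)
The plan is to reduce the Craig interpolation property to the interpolation property already shown decidable in Theorem~\ref{th.one}. Recall that the two notions differ only in presentation: the interpolation property speaks about validities of the form $a(\overline{x},\overline{y}) \leq b(\overline{y},\overline{z})$, whereas the Craig interpolation property speaks about validities of the form $1 \leq a(\overline{x},\overline{y}) \to b(\overline{y},\overline{z})$. By condition~(iii) of Definition~\ref{def.latticecond}, namely $1 \leq^L A \to B$ iff $A \leq^L B$, these two statements are equivalent pointwise in the lattice. First I would invoke this equivalence to translate any Craig-interpolation instance into an instance of the ordinary interpolation property and vice versa.

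Concretely, I would argue as follows. Suppose $\mathcal{L}^\to$ admits the interpolation property. Given a valid formula $1 \leq a(\overline{x},\overline{y}) \to b(\overline{y},\overline{z})$, by Definition~\ref{def.latticecond}~(iii) this is equivalent to $a(\overline{x},\overline{y}) \leq b(\overline{y},\overline{z})$ being valid. The interpolation property then yields an interpolant $i(\overline{y})$ with $a(\overline{x},\overline{y}) \leq i(\overline{y})$ and $i(\overline{y}) \leq b(\overline{y},\overline{z})$. Applying (iii) again in the reverse direction gives $1 \leq a(\overline{x},\overline{y}) \to i(\overline{y})$ and $1 \leq i(\overline{y}) \to b(\overline{y},\overline{z})$, so the same $i(\overline{y})$ is a Craig interpolant, and it lies in the common language since it is built only from intersection variables. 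The converse direction is symmetric. This establishes that $\mathcal{L}^\to$ admits the interpolation property if and only if it admits the Craig interpolation property.

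Having shown the two properties coincide for every finite $\mathcal{L}^\to$, the decidability of the Craig interpolation property is immediate: by Theorem~\ref{th.one} it is decidable whether a given finite $\mathcal{L}^\to$ admits the interpolation property, and by the equivalence just established this is the same decision problem as the Craig interpolation property. I do not anticipate a genuine obstacle here, since the argument rests entirely on the biconditional in Definition~\ref{def.latticecond}~(iii), which is exactly the bridge between $\leq$ and $\to$. The only point requiring mild care is to confirm that the partition of variables into left, right, and intersection variables is preserved under the translation, so that an interpolant in the common language on one side remains an interpolant in the common language on the other; but this is transparent because the formulas $a$ and $b$, and hence their variable sets, are literally unchanged by passing between $a \leq b$ and $1 \leq a \to b$.
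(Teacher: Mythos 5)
Your proposal is correct and matches the paper's intent: the corollary is stated without proof precisely because, via condition~3 of Definition~\ref{def.latticecond} ($1 \leq A \to B$ iff $A \leq B$), the Craig interpolation property and the interpolation property are the same decision problem, so decidability follows directly from Theorem~\ref{th.one}. Your added check that the left/right/intersection variable partition is unchanged under the translation is a reasonable point of care, though immediate since $a$ and $b$ are untouched.
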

\begin{example} Consider $\mathcal{L}^\to = \langle \{0, \frac{1}{2}, 1\}, \lor, \land, \to, \overline{0}\rangle$ with $0 < 1/2$ and $1/2 < 1$.
$$ a \to b = \begin{cases} 1 \quad \quad \quad a \leq b \\ b-a \quad \mbox{ else} \end{cases}$$
($b-a$ in the usual sense). $x \land (x \to 0) \to y \lor (y \to 0)$ interpolates iff a constant for $\frac{1}{2}$ is added.
\end{example}
\begin{remark}
Note that both extension and reduction of the signature may influence interpolation. 
\end{remark}
\begin{example} Consider $\mathcal{L}^\to = \langle \{0, \frac{1}{2}, 1\}, \lor, \land, \to, \overline{0}\rangle$, where $\overline{0} = 0$, $0 < \frac{1}{2}$ and $\frac{1}{2} < 1$
$$ a \to b = \begin{cases} 1 \quad a \leq b \\ b \quad \mbox{else} \end{cases}$$
$\mathbf{L^0}(\mathcal{L}^\to)$ interpolates ($\mathbf{L^0}(\mathcal{L}^\to)$ is a three-valued G\"odel logic).

Extend $\mathcal{L}^\to$ to $\mathcal{L'}^\to  = \langle \{0, \frac{1}{2}, 1\}, \lor, \land, \to, \Box_{up}, \Box_{down}, \overline{0}\rangle$.  $\mathcal{L'}^\to$ is defined as before, with the addition of 
$$ \Box_{up}(i) = \begin{cases} i \quad i \leq \frac{1}{2} \\ \frac{1}{2} \quad \mbox{else} \end{cases}$$
$$ \Box_{down}(i) = \begin{cases} i \quad \frac{1}{2} \leq i \\ \frac{1}{2} \quad \mbox{else} \end{cases}$$
$\mathbf{L^0}(\mathcal{L'}^\to)$ does not interpolate as $\Box_{down}(x) \to \Box_{up}(y)$ does not interpolate. 
Note that the addition of constants alone does not weaken the interpolation property.
\end{example}
Proposition \ref{prop.c} $2.$ makes it possible to characterize all extensions of a lattice by constants which admit interpolation. $\mbox{SPECTRUM}(\mathcal{L}^\to) = $
\[ \{V \, | \, \mathcal{L}^\to \mbox{ extended by constants representing the values in } V \mbox{ interpolates}\}. \]
\begin{corollary}
{\rm SPECTRUM} is a calculable function.
\end{corollary}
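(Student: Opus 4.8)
The plan is to show that $\mathrm{SPECTRUM}(\mathcal{L}^\to)$ can be computed by finitely many invocations of the decision procedure established in Theorem~\ref{th.one}. The key observation is that $\mathrm{SPECTRUM}(\mathcal{L}^\to)$ is a set of subsets of the finite domain $L$, so it has at most $2^{|L|}$ elements. For each candidate subset $V \subseteq L$, I would form the extended signature $\mathcal{L}^\to(\mathcal{D}_V)$, which adds one constant $c_i$ of value $i$ for each $i \in V$ (as defined in the paragraph following Proposition~\ref{prop.b}). Since $L$ is finite, there are only finitely many such $V$, and each extended lattice $\mathcal{L}^\to(\mathcal{D}_V)$ is again a finite $\mathcal{L}^\to$-lattice.

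The main steps are as follows. First I would enumerate all $V \in \mathcal{P}(L)$, a finite list. For each $V$, I would construct $\mathcal{L}^\to(\mathcal{D}_V)$ explicitly: its domain is still $L$, and its connectives are those of $\mathcal{L}^\to$ together with the nullary constants indexed by $V$. Second, I would apply Theorem~\ref{th.one} to each $\mathcal{L}^\to(\mathcal{D}_V)$ to decide whether it admits the interpolation property; this is legitimate precisely because each extension is a finite $\mathcal{L}^\to$-lattice and Theorem~\ref{th.one} gives a uniform decision procedure for all such lattices. Third, I would collect exactly those $V$ for which the test returns ``yes''; by the very definition of $\mathrm{SPECTRUM}$, this collected set \emph{is} $\mathrm{SPECTRUM}(\mathcal{L}^\to)$. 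Since each of the finitely many tests terminates and the enumeration of $\mathcal{P}(L)$ is effective, the overall procedure terminates and outputs $\mathrm{SPECTRUM}(\mathcal{L}^\to)$, establishing that it is a calculable function.

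I do not expect a genuine obstacle here, as the corollary is essentially an immediate consequence of Theorem~\ref{th.one} together with the finiteness of $L$; the only point deserving care is the verification that adding constants yields again a legitimate finite $\mathcal{L}^\to$-lattice to which Theorem~\ref{th.one} applies. This is clear since constants are nullary connectives whose (vacuous) polarity and monotonicity conditions in Definition~\ref{def.latticecond} are trivially satisfied, and since the lattice order and the implication condition~(3) are unaffected by the new constants. One may also note as a shortcut that by Proposition~\ref{prop.c}~$2.$ the full set $V = L$ always lies in $\mathrm{SPECTRUM}(\mathcal{L}^\to)$, so the spectrum is never empty, but this is not needed for calculability.
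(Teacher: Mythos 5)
Your proof is correct and follows exactly the argument the paper intends: since $L$ is finite there are only finitely many candidate sets $V \subseteq L$, each extension $\mathcal{L}^\to(\mathcal{D}_V)$ is again a finite lattice-oriented signature, and Theorem~\ref{th.one} decides interpolation for each one, so SPECTRUM is computed by finitely many terminating tests. The additional check that adding nullary constants preserves the conditions of Definition~\ref{def.latticecond} is a sensible touch, but the argument is the same one the paper relies on.
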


\section{First-Order Logic}

\begin{definition}[predicate language]
A (countable) predicate language $\mathcal{P}$ is a triple $\langle \Pb, \F, \ar \rangle$ consisting of disjoint countable sets $\Pb$ and $\F$ of predicate and function symbols, and a function $\ar \colon \Pb \cup \F \to \mathbb{N}$ assigning arities to these symbols. We call nullary function symbols object constants and nullary predicate symbols propositional atoms. For convenience, a predicate language containing only propositional atoms will be called propositional.
\end{definition}
Let us fix a lattice-oriented signature $\mathcal{L}^\to$ and a predicate language $\mathcal{P} = \langle \Pb, \F, \ar \rangle$. We define $\mathcal{P}$-terms, atomic $\mathcal{P}$-formulas, and $\langle \mathcal{L}, \mathcal{P}\rangle$-formulas as in classical logic using a fixed countably infinite set $OV$ of object variables $x, y, \ldots$, the quantifiers $\forall$ and $\exists$ and the connectives in $\mathcal{L}^\to$.
$\langle \mathcal{L}, \mathcal{P} \rangle$-formulas are denoted with $\varphi, \psi, \ldots$.
The notions of bound and free variables, closed terms, sentences, prenex formulas, and substitutability in formulas are defined in the standard way.

An $\langle \mathcal{L}^\to, \mathcal{P}\rangle$-structure $\mathcal{S}$ is a pair $\langle \Ab, \Sb \rangle$ such that
\begin{enumerate}
	\item $\Ab$ is a finite $\mathcal{L}^\to$-lattice,
	\item $\Sb$ is a triple $\langle S, \{P^{\Sb}\}_{P \in \Pb}, \{f^{\Sb}\}_{f \in \F}\rangle$ where
	\begin{itemize}
		\item $S$ is a non-empty set,
		\item $P^\Sb \colon S^n \to A$ is a function for each $n$-ary predicate symbol $P \in \Pb$,
		\item $f^\Sb \colon S^n \to S$ is a function for each $n$-ary function symbol $f \in \F$. 
	\end{itemize}	 
\end{enumerate}
An $\mathcal{S}$-evaluation is a mapping $v \colon OV \to S$. For any $\mathcal{S}$-evaluation $v$ we denote by $v[x \to a]$ the $\mathcal{S}$-evaluation satisfying $v[x \to a](x) = a$ and $v[x \to a](y) = v(y)$ for each $y \not= x$. Terms and formulas are evaluated in $\mathcal{S}$ with respect to an $\mathcal{S}$-evaluation $v$ according to the following conditions, where $f \in \F, P \in Pb$ and $c \in C_{\mathcal{L}^\to}$:
\begin{itemize}
	\item $|| x ||_v^\mathcal{S} = v(x)$,
	\item $|| f(t_1, \ldots, t_n) ||_v^\mathcal{S} = f^\mathcal{S}(||t_1||_v^\mathcal{S}, \ldots, || t_n ||_v^\mathcal{S})$,
	\item $|| P(t_1, \ldots, t_n) ||_v^\mathcal{S} = P^\mathcal{S}(||t_1||_v^\mathcal{S}, \ldots, || t_n ||_v^\mathcal{S})$,
	\item $|| c(\varphi_1, \ldots, \varphi_n) ||_v^\mathcal{S} = c^\Ab(||\varphi_1||_v^\mathcal{S}, \ldots, || \varphi_n ||_v^\mathcal{S})$,
	\item $|| \forall x \varphi ||_v^\mathcal{S} = \bigwedge \{ || \varphi||_{v[x \to a]}^\mathcal{S} \ | \ a \in S\}$,
	\item $|| \exists x \varphi ||_v^\mathcal{S} = \bigvee \{ || \varphi ||_{v[x \to a]}^\mathcal{S} \ | \ a \in S\}$.
\end{itemize}
We write $\models_1 C$ ($C$ is valid in $\mathcal{L}^\to$) iff for every structure $\langle \Ab, \Sb \rangle$ and every $v$ $1 \leq ||C||^\Sb_v$ and $C_1, \ldots , C_n \models_1 A'$ for $\models_1 \forall \overline{x} \bigwedge_{i=1}^n C_i \to C'$. The first-order logic $\mathbf{L^1}(\mathcal{L}^\to)$ is defined as the set of valid sentences $C$ in $\mathcal{L}^\to$.

Monotony and antitony of first-order contexts are iterated as usual, quantifiers do not change the polarity.
\begin{lemma}\label{alpha}
For formulas $A$, $B$ and a corresponding context $C( \circ )$ it holds
\[ \mbox{ if } \quad \models_1 A \to B \quad \mbox{ then } \quad \models_1 C(A) \to C(B) \]
if $\circ$ occurs positively and 
\[ \mbox{ if } \quad \models_1 A \to B \quad \mbox{ then } \quad \models_1 C(B) \to C(A) \]
if $\circ$ occurs negatively.
\end{lemma}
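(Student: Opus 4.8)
The plan is to prove Lemma~\ref{alpha} by induction on the structure of the context $C(\circ)$, where the polarity of the hole $\circ$ is tracked through the construction. The base case is when $C(\circ) = \circ$, i.e., the context is the hole itself, occurring positively; here the claim $\models_1 A \to B$ implies $\models_1 C(A) \to C(B)$ is immediate since $C(A) = A$ and $C(B) = B$. (There is no base case for a negatively occurring bare hole, since the empty context is positive by convention.)

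For the induction step I would distinguish the two ways a context can be built: applying a connective $c \in C_{\mathcal{L}^\to}$, or applying a quantifier. Consider first $C(\circ) = c(D_1, \ldots, D_{i-1}, C'(\circ), D_{i+1}, \ldots, D_{n_c})$, where the hole sits inside the $i$-th argument via a subcontext $C'(\circ)$. The polarity of $\circ$ in $C$ is the product of its polarity in $C'$ and the polarity $p_c(i)$ of the $i$-th argument of $c$. By the induction hypothesis applied to $C'$, we get $\models_1 C'(A) \to C'(B)$ if $\circ$ is positive in $C'$ (and the reversed implication if negative). Now I invoke the monotony/antitony of $c^L$ guaranteed by Definition~\ref{def.latticecond}(2): if $p_c(i) = +$ then $c^L$ is monotone in argument $i$, so the ordering $||C'(A)||_v^{\mathcal{S}} \leq ||C'(B)||_v^{\mathcal{S}}$ is preserved in the $i$-th slot, giving $||C(A)||_v^{\mathcal{S}} \leq ||C(B)||_v^{\mathcal{S}}$ pointwise, hence $\models_1 C(A) \to C(B)$ by Definition~\ref{def.latticecond}(3); if $p_c(i) = -$ the ordering is reversed. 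Combining the sign from $C'$ with the sign $p_c(i)$ yields exactly the four-way case analysis matching the product rule for polarities, so the conclusion has the correct direction.

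For the quantifier step, take $C(\circ) = \forall x\, C'(\circ)$ (the $\exists$ case is dual). Since quantifiers do not change polarity, the hole has the same sign in $C$ as in $C'$. Assuming $\circ$ is positive, the induction hypothesis gives $\models_1 C'(A) \to C'(B)$, which by definition of $\models_1$ means $||C'(A)||_{v[x \to a]}^{\mathcal{S}} \leq ||C'(B)||_{v[x \to a]}^{\mathcal{S}}$ for every evaluation and every $a \in S$. Taking the infimum (meet) over all $a \in S$ preserves the inequality, since the meet is monotone: $\bigwedge_a ||C'(A)||_{v[x \to a]}^{\mathcal{S}} \leq \bigwedge_a ||C'(B)||_{v[x \to a]}^{\mathcal{S}}$. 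By the evaluation clause for $\forall$ this is precisely $||C(A)||_v^{\mathcal{S}} \leq ||C(B)||_v^{\mathcal{S}}$, and since $v$ and $\mathcal{S}$ were arbitrary we obtain $\models_1 C(A) \to C(B)$. The dual $\exists$ case uses monotony of the join, and the negative-polarity subcases simply carry the reversed inequality through the same monotone operations.

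The main obstacle, such as it is, lies less in any single step than in setting up the bookkeeping cleanly: one must fix precisely how polarity is assigned to the hole in a compound context (the product of the local polarities along the path to the hole) and verify that this assignment is exactly what makes the monotonicity of $c^L$ and of the lattice meet/join propagate the inequality in the right direction at every node. Once the polarity convention is pinned down and shown to multiply correctly through connectives while staying invariant under quantifiers, each inductive case reduces to a direct appeal to Definition~\ref{def.latticecond}(2)--(3) and the monotonicity of $\bigwedge$ and $\bigvee$ in a lattice, so no genuinely hard computation remains.
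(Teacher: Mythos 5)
Your proof is correct and follows essentially the same route as the paper, whose entire proof is the one-line remark ``by iteration of the polarity of the connectives'': your structural induction on the context, tracking the product of polarities and appealing to Definition~\ref{def.latticecond}(2)--(3) and the monotonicity of $\bigwedge$ and $\bigvee$, is exactly that iteration spelled out in detail.
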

\begin{proof}
By iteration of the polarity of the connectives.
\end{proof}

\begin{definition}[weak interpolation property]
$\mathbf{L^1}(\mathcal{L}^\to)$ has the weak interpolation property if for every $\models_1 A \to B$ there is a $I$ with predicate symbols occurring in both $A$ and $B$ such that $\models_1 A \to I$ and $\models_1 I \to B$ hold.
\end{definition}

\begin{definition}[strong interpolation property]
$\mathbf{L^1}(\mathcal{L}^\to)$ has the strong interpolation property if for every $\models_1 A \to B$ there is a $I$ with predicate and function symbols occurring in both $A$ and $B$ such that $\models_1 A \to I$ and $\models_1 I \to B$ hold.
\end{definition}

\section{Skolemization}\label{skolemization}
We use skolemization to replace strong quantifiers in valid formulas such that the original formulas can be recovered. Note that several Skolem functions for the replacement of a single quantifier are necessary to represent proper suprema and proper infima.

\begin{definition}
Consider a formula $B$ in a context $A(B)$. Then its skolemization $A(sk(B))$ is defined as follows:

Replace all strong quantifier occurrences (positive occurrence of $\forall$ and negative occurrence of $\exists$) (note that no quantifiers in $A$ bind variables in $B$) of the form $\exists x C(x)$ (or $\forall x C(x)$) in $B$ by $\bigvee_{i = 1}^{|W|} C(f_i (\overline{x}))$ (or $\bigwedge_{i = 1}^{|W|}C(f_i (\overline{x}))$), where $f_i$ are new function symbols and $\overline{x}$ are the weakly quantified variables of the scope.

Skolem axioms are closed sentences
\[\forall \overline{x} (\exists y A(y, \overline{x}) \supset \bigvee_{i=1}^{|W|} A(f_{i}(\overline{x}), \overline{x}) \quad \mbox{and} \quad \forall \overline{x} (\bigwedge_{i=1}^{|W|} A(f_{i}(\overline{x}), \overline{x}) \supset \forall y A(y, \overline{x})) \]
where $f_i$ are new function symbols (Skolem functions).
\end{definition}

\begin{lemma}\label{beta}
\textbf{ }
\begin{enumerate}
\item If $\models^1 A(B)$ then $\models^1 A(sk(B)).$

\item If $S_1 \ldots S_k \models^1 A(sk(B))$ then $ S_1 \ldots S_k \models^1 A(B )$, for suitable Skolem axioms $S_1 \ldots S_k$.

\item If $ S_1 \ldots S_k \models^1 A$, where $S_1 \ldots S_k$ are Skolem axioms and $A$ does not contain Skolem functions then $ \models^1 A. $
\end{enumerate}
\end{lemma}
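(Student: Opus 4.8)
The plan is to reduce all three statements to a single pointwise comparison of truth values, using only the interpretation clauses for the quantifiers together with the finiteness of the lattice $L$. Fix a structure $\mathcal S = \langle \Ab,\Sb\rangle$ and an evaluation $v$, and recall that the strong quantifier occurrences skolemized in $sk(B)$ are positive $\forall x\, C(x)$, replaced by $\bigwedge_{i=1}^{|W|} C(f_i(\overline x))$, and negative $\exists x\, C(x)$, replaced by $\bigvee_{i=1}^{|W|} C(f_i(\overline x))$. Since the value of each term $f_i(\overline x)$ lies in $S$, it occurs among the witnesses of the meet (resp. join) defining the quantifier, so for every interpretation of the fresh Skolem functions
\[ ||\forall x\, C(x)||_v^{\mathcal S} \leq \Big|\Big|\bigwedge_{i=1}^{|W|} C(f_i(\overline x))\Big|\Big|_v^{\mathcal S}, \qquad \Big|\Big|\bigvee_{i=1}^{|W|} C(f_i(\overline x))\Big|\Big|_v^{\mathcal S} \leq ||\exists x\, C(x)||_v^{\mathcal S}. \]
By the polarity of the occurrence (a positive $\forall$ sits in a monotone context, a negative $\exists$ in an antitone one) both inequalities raise the value of the whole formula, so Lemma \ref{alpha}, read pointwise, gives $||A(B)||_v^{\mathcal S} \leq ||A(sk(B))||_v^{\mathcal S}$ for all $\mathcal S, v$. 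Part 1 is then immediate: if $A(B)$ is valid then $1 \leq ||A(B)||_v^{\mathcal S} \leq ||A(sk(B))||_v^{\mathcal S}$ everywhere, so $\models^1 A(sk(B))$; several strong quantifiers are handled by induction on their number.

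For Part 2 the Skolem axioms supply exactly the reverse inequalities. In any structure that validates $S_1,\dots,S_k$, the axiom $\forall\overline x(\exists y\, C(y,\overline x) \supset \bigvee_{i} C(f_i(\overline x),\overline x))$ is, by the implication--order correspondence of Definition \ref{def.latticecond}(3), nothing but the statement that $||\exists y\, C||_v \leq ||\bigvee_i C(f_i)||_v$ for every $v$, and dually for the $\forall$-axiom. Combined with the always-valid inequalities of the first paragraph this forces $||B||_v = ||sk(B)||_v$, hence $||A(B)||_v = ||A(sk(B))||_v$ by the pointwise form of Lemma \ref{alpha}. Thus a model of the Skolem axioms assigns $A(B)$ and $A(sk(B))$ the same value, so $S_1\dots S_k \models^1 A(sk(B))$ yields $S_1\dots S_k \models^1 A(B)$; nesting is again absorbed by induction on the number of strong quantifiers.

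Part 3 is the conservativity of the Skolem axioms, and I expect it to be the main obstacle, as it is the only part demanding a model construction rather than a comparison of values. Given an arbitrary structure $\mathcal S_0$ over the Skolem-function-free language, the plan is to expand it to a structure $\mathcal S$ validating $S_1,\dots,S_k$ without altering the interpretation of any symbol of $\mathcal S_0$; then $S_1\dots S_k \models^1 A$ gives $1 \leq ||A||^{\mathcal S}$, and since $A$ contains no Skolem function $||A||^{\mathcal S} = ||A||^{\mathcal S_0}$, so $A$ is valid. The crux --- and the precise reason several Skolem functions per quantifier are unavoidable --- is that the supremum $\bigvee_{a \in S} ||C(a,\overline x)||_v$ need not be attained by any single witness; but the value set $\{ ||C(a,\overline x)||_v : a \in S \}$ is a subset of the finite lattice $L$, hence has at most $|W|$ distinct elements (this is exactly what fixes the number of Skolem functions). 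Choosing witnesses $a_1,\dots,a_m$ ($m \leq |W|$) that realize these values and letting the interpretation of $f_i$ map the values of $\overline x$ to $a_i$ (and to a fixed witness for $i > m$) makes $\bigvee_i ||C(f_i(\overline x),\overline x)|| = ||\exists y\, C(y,\overline x)||$, so the $\exists$-axiom takes the top value; the $\forall$-axiom is symmetric, with meets. The point I would be most careful about is nested strong quantifiers, where an inner Skolem function must select its witnesses uniformly as a function of the outer weak variables $\overline x$; this is accommodated by defining the Skolem functions pointwise in $\overline x$ and expanding $\mathcal S_0$ from the innermost occurrences outward, so that at each stage the axiom being satisfied refers only to already-interpreted symbols.
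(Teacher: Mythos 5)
Your proof is correct and takes essentially the same approach as the paper: part 1 is the paper's combination of Lemma \ref{alpha} with $\models^1 \forall x D(x) \supset D(t)$ and $\models^1 D(t) \supset \exists x D(x)$, merely phrased as pointwise inequalities; part 2 reads the Skolem axioms as the reverse inequalities exactly as the paper's ``reconstruct strong quantifiers'' step intends; and part 3 is the paper's model-expansion argument (choosing at most $|W|$ witnesses realizing the finitely many truth values so that suprema and infima are attained), stated directly rather than contrapositively. The only cosmetic differences are the direct rather than counter-model formulation of part 3 and that your pointwise choice of witnesses leaves implicit the appeal to $AC$ that the paper makes explicit.
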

\begin{proof}
\textbf{ }

\begin{enumerate}
\item Note that
\[ \mbox{ if } \quad \models A(D) \quad \mbox{ then } \quad \models A(D \lor D) \]
and
\[ \mbox{ if } \quad \models A(D) \quad \mbox{ then } \quad \models A(D \land D). \]
Use Lemma \ref{alpha} and
\[ \models^1 D'(t) \supset \exists x D'(x), \quad \models^1 \forall x D'(x) \supset D'(t). \]

\item Use Lemma \ref{alpha} and suitable Skolem axioms to reconstruct strong quantifiers.
\item Assume $\not\models^1 A$. As usual, we have to extend the valuation to the Skolem functions to verify the Skolem axioms. There is a valuation in $\langle D_{\Phi^1}, \Omega_{\Phi^1} \rangle$ s.t. $\Phi^{1}(A) \not= 1$. Using at most $|W|$ Skolem functions and $AC$ we can always pick witnesses as values for the Skolem functions such that the first-order suprema and infima are reconstructed on the propositional level. ($AC$ is applied to sets of objects where the corresponding truth value is taken.)
\[ \supr\{\Phi^{1}(B(f_i (\overline{t}), \overline{t})) \, | \, 1 \leq i \leq |W| \} = \]
\[ \supr\{\Phi^{1}(B(d, \overline{t}) \, | \, d \in D_{\Phi^{1}} \} = \Phi^{1}(\exists y B(y, \overline{t})) \]
and
\[ \infi\{\Phi^{1}(B(f_i (\overline{t}), \overline{t})) \, | \, 1 \leq i \leq |W|\} = \]
\[ \infi \{\Phi^{1}(B(d, \overline{t})) \, | \, d \in D_{\Phi^{1}} \} = \Phi^{1}(\forall y B(y, \overline{t})). \]

\end{enumerate}
\end{proof}

\begin{example} We continue with the logic MC and its first-order variant $\MF$ introduced in Example \ref{ex.running}:
$\M = \textbf{L}^0 (\mathcal{L}^\to)$ is the set of valid propositional sentences and
$\MF = \textbf{L}^1(\mathcal{L}^\to)$ the set of valid first-order sentences. For the given logic $\MF$
\[ \exists x B(x) \supset sk(\exists y \forall z C(y,z)) \equiv \exists x B(x) \supset \exists y \bigwedge_{i = 1}^{5} C(y, f_i(y)). \]
\end{example}

\section{Expansions}\label{expansions}
Expansions, first introduced in \cite{miller1987compact}, are natural structures representing the instantiated variables for quantified formulas. They record the substitutions for quantifiers in an effort to recover a sound proof of the original formulation of Herbrand's Theorem. As we work with skolemized formulas, in this paper we we consider only expansions for formulas with weak quantifiers. Consequently the arguments are simplified.

In the following we assume that a constant $c$ is present in the language and that $t_1 , t_2, \ldots$ is a fixed ordering of all closed terms (terms not containing variables).

\begin{definition} A term structure is a structure $\langle D, \Omega \rangle$ such that $D$ is the set of all closed terms.
\end{definition} 

\begin{proposition} \label{prop.expansion} Let $\Phi^1 (\exists x A(x)) = \upsilon$ in a term structure. Then $\Phi^1 (\exists x A(x) = \Phi^1 (\bigvee_{i = 1}^{n} A(t_i ))$ for some $n$. Analogously for $\forall x A(x)$, i.e. let $\Phi^1 (\forall x A(x)) = \upsilon$ in a term structure, then $\Phi^1 (\forall x A(x)) = \Phi^1 (\bigwedge_{i=1}^{n} A(t_i ))$ for some $n$.
\end{proposition}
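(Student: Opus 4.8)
The statement asserts that in a term structure, the supremum defining $\Phi^1(\exists x\, A(x))$ is always attained by a finite subdisjunction over closed terms (dually for $\forall$). Since $D$ is the set of \emph{all} closed terms, the evaluation unfolds as
\[
\Phi^1(\exists x\, A(x)) = \bigvee\{\Phi^1(A(d)) \mid d \in D\} = \supr\{\Phi^1(A(t_i)) \mid i \in \mathbb{N}\},
\]
ranging over the fixed enumeration $t_1, t_2, \ldots$ of closed terms. The key structural fact I would exploit is that the lattice $\Ab$ is \emph{finite}: the set of values $\{\Phi^1(A(t_i)) \mid i \in \mathbb{N}\} \subseteq A$ is a subset of a finite set.

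\textbf{Main step.}
First I would observe that a supremum (join) of any subset $T \subseteq A$ of a finite lattice is the join of finitely many elements — indeed of at most $|A|$ elements, since $T$ itself has at most $|A|$ distinct members. So let $\upsilon = \bigvee T$ where $T = \{\Phi^1(A(t_i)) \mid i \in \mathbb{N}\}$. Because $T$ is finite, enumerate its distinct values as $w_1, \ldots, w_k$ with $k \le |A|$, and for each $w_j$ pick an index $i_j$ with $\Phi^1(A(t_{i_j})) = w_j$ (such an index exists by definition of $T$). Setting $n = \max_j i_j$, the finite disjunction $\bigvee_{i=1}^{n} A(t_i)$ has value
\[
\Phi^1\!\left(\bigvee_{i=1}^{n} A(t_i)\right) = \bigvee_{i=1}^{n} \Phi^1(A(t_i)) = \bigvee_{j=1}^{k} w_j = \bigvee T = \upsilon,
\]
where the first equality uses the evaluation clause for $\lor$ together with associativity/commutativity of the lattice join, and the middle equality holds because the partial disjunction up to $n$ already includes a representative of every value in $T$ and adding further terms from $\{t_1,\ldots,t_n\}$ can only contribute values already present in $T$. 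The dual argument for $\forall x\, A(x)$ replaces $\bigvee$ by $\bigwedge$ (meet) and $\supr$ by $\infi$, using that a finite lattice is closed under arbitrary meets of its subsets.

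\textbf{The main obstacle.}
There is no serious analytic obstacle here: the whole point is that finiteness of the truth-value lattice collapses the infinitary supremum of the quantifier clause into a finite join, so no completeness or compactness argument over the (infinite) term domain is needed. The one place requiring care is purely bookkeeping: the value $\bigvee_{i=1}^n \Phi^1(A(t_i))$ must equal the full supremum over \emph{all} closed terms, which holds precisely because every element of $T$ is realized among the first $n$ terms and the lattice join is monotone, so extending the index range beyond $n$ cannot increase the join past $\upsilon$. I would state this monotonicity/stabilization observation explicitly, note that $n$ can be taken $\le |A|$-bounded in the number of \emph{distinct} values (though not in the index, which depends on the enumeration), and thereby conclude the claim for both quantifiers.
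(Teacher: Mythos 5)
Your proof is correct and takes essentially the same approach as the paper: the paper's one-line argument is that, since only finitely many truth values exist, the monotone sequence of partial joins $\Phi^1(\bigvee_{i=1}^n A(t_i))$ stabilizes at the supremum, which is exactly the finiteness-of-the-lattice collapse you spell out via representatives of the distinct values. Your version merely makes the bookkeeping (choice of indices $i_j$ and $n = \max_j i_j$) explicit.
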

\begin{proof} Only finitely many truth values exists, therefore there is an $n$ such that the valuation becomes stable on $\bigvee_{i=1}^n A(t_i )$ ($\bigwedge_{i=1}^n A(t_i )$).
\end{proof}

\begin{definition} Let $E$ be a formula with weak quantifiers only. The $n$-th expansion $E_n$ of $E$ is obtained from $E$ by replacing inside out all subformulas $\exists x A(x)$ ($\forall x A(x)$) by $\bigvee_{i=1}^n A(t_i )$ ($\bigwedge_{i=1}^n A(t_i )$). $E_n$ is a Herbrand expansion iff $E_n$ is valid. In case there are only $m$ terms $E_{m+k} = E_m$.
\end{definition}

\begin{lemma} \label{lab.exp} Let $\Phi^1 (E) = \upsilon$ in a term structure. Then there is an $n$ such that for all $m \geq n$ $\Phi^1 (E_m ) = \upsilon$.
\end{lemma}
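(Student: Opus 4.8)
The plan is to argue by induction on the structure of $E$, exploiting the fact that $E$ contains only weak quantifiers, i.e. every occurrence of $\exists$ is positive and every occurrence of $\forall$ is negative. This polarity information is exactly what controls the behaviour of the partial instantiations: a positive existential is expanded into a partial disjunction $\bigvee_{i=1}^m A(t_i)$, which grows with $m$ and never exceeds the true supremum $\Phi^1(\exists x A(x))$, while a negative universal is expanded into a partial conjunction $\bigwedge_{i=1}^m A(t_i)$, which shrinks with $m$ but, sitting in an antitone context, again pushes the global value upward and never below $\Phi^1(\forall x A(x))$. Since the lattice is finite, a monotone sequence of expansion values bounded by $\upsilon$ must stabilize, and the task reduces to showing that the stable value is $\upsilon$ itself.

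First I would establish a polarity-indexed two-part claim by induction on subformula occurrences, proved with Lemma \ref{alpha}: for every positively occurring subformula $F$ one has $\Phi^1(F_m) \leq \Phi^1(F)$ and $\Phi^1(F_m) \leq \Phi^1(F_{m+1})$, and for every negatively occurring subformula the two inequalities are reversed. At a connective the claim follows from the monotony/antitony of its interpretation together with the induction hypothesis applied to the arguments (flipping the polarity at antitone arguments). At a positive $\exists x A(x)$ one uses $\bigvee_{i=1}^m \Phi^1((A(t_i))_m) \leq \bigvee_{i} \Phi^1(A(t_i)) = \Phi^1(\exists x A(x))$ together with the monotonicity of the partial disjunction in $m$; the negative $\forall$ case is dual. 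Applied to $E$ itself, which is a positive occurrence, this yields $\Phi^1(E_m) \leq \upsilon$ for every $m$ and monotonicity of the map $m \mapsto \Phi^1(E_m)$.

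It then suffices to exhibit a single $m_0$ with $\Phi^1(E_{m_0}) = \upsilon$, for monotonicity and the upper bound force $\Phi^1(E_m) = \upsilon$ for all $m \geq m_0$. This I would prove by a second structural induction. For $E = \exists x A(x)$, Proposition \ref{prop.expansion} gives an $N$ with $\Phi^1(\exists x A(x)) = \bigvee_{i=1}^N \Phi^1(A(t_i))$; the induction hypothesis supplies, for each of the finitely many terms $t_1, \ldots, t_N$, a bound beyond which $\Phi^1((A(t_i))_m) = \Phi^1(A(t_i))$; taking $m_0$ to be the maximum of $N$ and these finitely many bounds gives $\Phi^1(E_{m_0}) \geq \bigvee_{i=1}^N \Phi^1(A(t_i)) = \upsilon$, and with the upper bound equality holds. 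The $\forall$ case is dual and the connective case is immediate from the induction hypothesis on the arguments.

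The main obstacle is the interaction of nested quantifiers of opposite polarity: for a formula such as $\exists x \forall y$ the outer partial disjunction and the inner partial conjunction move the value in opposite directions, so without the polarity bookkeeping the sequence $\Phi^1(E_m)$ could a priori oscillate rather than stabilize. The weak-quantifier hypothesis is precisely what aligns both effects into a single monotone tendency, and the uniform upper bound $\Phi^1(E_m) \leq \upsilon$ is what tames the infinitely many closed terms entering the expansion beyond the finitely many supremum- and infimum-achieving instances isolated by Proposition \ref{prop.expansion}.
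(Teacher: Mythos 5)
Your proof is correct, and it rests on the same two pillars as the paper's own argument: Proposition \ref{prop.expansion} (finiteness of the lattice forces the partial disjunctions/conjunctions to attain the quantifier's value at some finite stage) and the polarity/monotonicity reasoning behind Lemma \ref{alpha}. The organization differs, however. The paper's proof is a top-down replacement argument: working outside in, each weakly quantified subformula is replaced by a finite disjunction or conjunction of \emph{exactly} the same truth value, and afterwards all these partial disjunctions and conjunctions are ``extended to common maximal'' length. What the paper leaves implicit is why this final uniformization, and indeed any further enlargement to $m \geq n$, leaves the value unchanged --- and that is precisely the content of your polarity-indexed monotonicity claim, that $\Phi^1(F_m)$ increases towards $\Phi^1(F)$ at positive occurrences and decreases towards it at negative ones, which only works because $E$ has weak quantifiers exclusively. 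By isolating this claim and combining it with a separate attainment induction (your $m_0$), you obtain the ``for all $m \geq n$'' part of the statement directly from monotone-bounded reasoning in a finite lattice, a point the paper's two-sentence proof passes over silently. So: same key lemma and same essential mechanism, but your decomposition into (i) monotone approximation bounded by $\upsilon$ and (ii) attainment at a single stage is more explicit and fills in exactly the steps the paper gestures at; the paper's version buys brevity at the cost of leaving the uniformization and tail-stabilization arguments to the reader.
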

\begin{proof} We apply Proposition \ref{prop.expansion} outside in to replace subformulas $\exists x$ $A(x)$ ($\forall x$ $A(x)$) stepwise by $\bigvee_{i=1}^n$ $A(t_i )$ ($\bigwedge_{i=1}^n$ $A(t_i )$) without changing the truth value. The disjunctions and conjunctions can be extended to common maximal disjunctions and conjunctions.
\end{proof}

\begin{theorem} \label{th.expansion}
Let $E$ contain only weak quantifiers. Then $\models E$ iff there is a Herbrand expansion $E_n$ of $E$. 
\end{theorem}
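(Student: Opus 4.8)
The plan is to prove the two directions separately, writing $\top$ for the top element $1$ of the (fixed, finite) lattice of truth values $L$, so that $\models F$ is equivalent to $\Phi^{1}(F)=\top$ in every structure; recall that the weak quantifiers of $E$ are exactly its positive $\exists$ and negative $\forall$ occurrences.

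For the direction from right to left, suppose some expansion $E_n$ is valid. From the basic validities $\models A(t)\supset\exists x A(x)$ and $\models\forall x A(x)\supset A(t)$ (used already in Lemma \ref{beta}) I get $\bigvee_{i=1}^{n}A(t_i)\leq\exists x A(x)$ and $\forall x A(x)\leq\bigwedge_{i=1}^{n}A(t_i)$. Since every $\exists$ in $E$ occurs positively and every $\forall$ negatively, Lemma \ref{alpha} lets me replace the expanded subformulas by the quantified ones one at a time without decreasing the value, giving $\models E_n\to E$. As $E_n$ is valid, $\top\leq\Phi^{1}(E_n)\leq\Phi^{1}(E)$ in every structure, so $\models E$.

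For the converse I argue contrapositively: assuming no $E_n$ is valid, I construct a single term structure falsifying $E$. First, the same polarity analysis shows $\models E_m\to E_{m+1}$, so $\Phi^{1}(E_m)\leq\Phi^{1}(E_{m+1})$ always; since $\top$ is the lattice top, $\Phi^{1}(E_{m+1})\neq\top$ forces $\Phi^{1}(E_m)\neq\top$, i.e. failure of validity persists downward along the chain. Next, because each $E_n$ is quantifier-free over the finitely many atomic formulas in the closed terms $t_1,\dots,t_n$, any structure refuting $E_n$ collapses to a valuation $\Omega$ of the term structure (domain $=$ all closed terms) via $\Omega(P(\overline{t})):=P^{\Sb}(\|\overline{t}\|)$; this preserves the value of $E_n$, so $E_n$ is already refuted on the term structure. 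Hence the set $U_m$ of term-structure valuations with $\Phi^{1}(E_m)\neq\top$ is nonempty, clopen in the product space of all assignments of values in $L$ to atomic closed formulas (it depends on finitely many coordinates), and the chain property gives $U_{m+1}\subseteq U_m$. Since $L$ is finite, this product space is compact, so the nested nonempty closed sets $U_m$ share a point $\Omega^{*}$ (equivalently, apply König's Lemma to the finitely branching tree of consistent partial valuations). Then $\Phi^{1}(E_m)\neq\top$ under $\Omega^{*}$ for every $m$, and Lemma \ref{lab.exp} gives $\Phi^{1}(E)=\Phi^{1}(E_m)$ under $\Omega^{*}$ for all large $m$; thus $\Phi^{1}(E)\neq\top$ and $\not\models E$, contradicting the assumption. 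If $E$ has free variables I first replace them by new constants, and if only finitely many closed terms exist the chain is eventually constant and compactness is unnecessary.

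The main obstacle is precisely this uniformity step in the converse: Lemma \ref{lab.exp} yields, for each individual term-structure valuation, \emph{some} expansion index at which the value of $E$ is attained, but a priori that index depends on the valuation. Converting the variety of counterexamples for the different $E_n$ into one index $n$ for which $E_n$ is valid in \emph{all} structures is what forces the compactness (or König) argument, and it relies essentially on the finiteness of $L$ (compactness of the valuation space, or finite branching of the tree) and on $\top$ being the lattice top, which makes non-validity propagate downward along the increasing chain of expansions.
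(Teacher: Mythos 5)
Your proof is correct and follows essentially the same route as the paper: the easy direction via Lemma \ref{alpha} together with $\models A(t) \supset \exists x A(x)$ and $\models \forall x A(x) \supset A(t)$, and the hard direction contrapositively, organizing the propositional counter-valuations of the expansions into a single limit term-structure valuation (your compactness of the product space $L^{\mathrm{atoms}}$ is the paper's K\"onig's Lemma argument in topological dress, as you yourself note) and concluding with Lemma \ref{lab.exp}. The only difference is that you make explicit two steps the paper leaves implicit, namely the monotonicity $\models E_m \to E_{m+1}$ which makes the counter-valuation sets nested (equivalently, makes the paper's tree connected), and the collapse of arbitrary counter-structures of the quantifier-free $E_n$ to term-structure valuations.
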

\begin{proof}
$\Rightarrow$: Assume $\models E$ but $\not\models E_n$ for all $n$. Let $\Gamma_i = \{\Phi_{i,v}^0 | \Phi_{i,v}^0 (E_i) \not= 1\}$ and define $\Gamma = \bigcup \Gamma_i$. Note that the first index in $\Phi^0_{i,v}$ relates to the expansion level and the second index to all counter-valuations at this level. Assign a partial order $<$ to $\Gamma$ by $\Phi_{i,v}^0 < \Phi_{j,w}^0$ for $\Phi_{i,v}^0 \in \Gamma_i$, $\Phi_{j,w}^0 \in \Gamma_j$ and $i < j$ iff $\Phi_{i,v}^0$ and $\Phi_{j,w}^0$ coincide on the atoms of $E_i$. By K\"onig's Lemma there is an infinite branch $\Phi_{1,i_{1}}^0 < \Phi_{2, i_{2}}^0 < \ldots$. Define a term structure induced by an evaluation on atoms $P$:
\[ \Phi^1 (P) = \begin{cases} \upsilon \quad P \mbox{ occurs in some } E_n \mbox{ and } \Phi_{n, i_{n}} (P) = \upsilon \\ 1 \quad \mbox{else} \end{cases}\]
$\Phi^1 (E) \not= 1$ by Lemma \ref{lab.exp}.

$\Leftarrow$: Use Lemma \ref{alpha} and $\models A(t) \supset \exists x A(x)$ and $\models \forall x A(x) \supset A(t)$. Note that 
\[ \mbox{ if } \quad \models A(D \lor D) \quad \mbox{ then } \quad \models A(D) \]
and
\[ \mbox{ if } \quad \models A(D \land D) \quad \mbox{ then } \quad \models A(D). \]
\end{proof}

\begin{example} Consider the lattice in Example \ref{ex.running}, Fig. \ref{lattice} and the term ordering $c < d$. The expansion sequence of $P(c,d,d) \supset \exists x P(c,x,d)$ is 
\[ E_1 = P(c,d,d) \supset P(c,c,d), E_2 = P(c,d,d) \supset P(c,c,d) \lor P(c,d,d), E_{2+k} = E_2 .\]
The second formula is a Herbrand expansion.
\end{example}

\section{The Interpolation Theorem}\label{maintheorem}
\begin{theorem}
Interpolation holds for $\mathbf{L}^0(\mathcal{L}^\to)$ iff interpolation holds for
$\mathbf{L}^1(\mathcal{L}^\to)$.
\end{theorem}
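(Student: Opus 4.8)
The statement has two directions, of which one is routine. For first-order interpolation implying propositional interpolation, note that propositional formulas are exactly the $\langle\mathcal{L},\mathcal{P}\rangle$-formulas all of whose predicate symbols are nullary and which contain no quantifiers; for such formulas $\models_1 A\to B$ coincides with propositional validity, since a term structure realizes every assignment of lattice values to the atoms. So a propositionally valid $A\to B$ is valid in $\mathbf{L}^1(\mathcal{L}^\to)$, a first-order interpolant $I$ exists over the predicate symbols common to $A$ and $B$, and any quantifier in $I$ is vacuous (no atom depends on a bound variable) and may be deleted over the nonempty domain; the resulting propositional $I$ interpolates $A\to B$ in $\mathbf{L}^0(\mathcal{L}^\to)$.

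The substantive direction follows the six-step procedure of the introduction. Given $\models_1 A\to B$, I would first skolemize the strong quantifiers by Lemma~\ref{beta}(1) to obtain a valid $A_1\to B_1$ containing weak quantifiers only, where $A_1,B_1$ arise from $A,B$ by skolemizing the strong quantifiers lying in antecedent, resp.\ succedent, position. By Theorem~\ref{th.expansion} there is a valid Herbrand expansion $A_2\to B_2$; being quantifier-free its validity is propositional validity over its atoms, so the assumed interpolation of $\mathbf{L}^0(\mathcal{L}^\to)$ yields $I^*$ over the atoms common to $A_2$ and $B_2$ with $A_2\to I^*$ and $I^*\to B_2$ valid.

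Next I would reintroduce the weak quantifiers. The key observation is that expansion behaves uniformly with respect to polarity: expanding a positive $\exists$ or a negative $\forall$ replaces a subformula by a smaller one, so on the succedent side $\models_1 B_2\to B_1$, whereas inside the antecedent the same quantifiers sit in the opposite polarity, giving $\models_1 A_1\to A_2$. Both follow from $\models_1 D'(t)\supset\exists x\,D'(x)$, $\models_1\forall x\,D'(x)\supset D'(t)$ together with Lemma~\ref{alpha}; composing yields $\models_1 A_1\to I^*$ and $\models_1 I^*\to B_1$. The crux, and what I expect to be the main obstacle, is Step~5: eliminating from $I^*$ every function symbol and constant not common to $A_1$ and $B_1$ — in particular every Skolem function and every Herbrand term built from a left- or right-only symbol. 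For an outermost subterm $t$ headed by a symbol absent from $B_1$, I replace all its occurrences by a fresh $z$ to form $I^*_z$ and prefix $\exists z$; for one headed by a symbol absent from $A_1$, I prefix $\forall z$. Here $\models_1 A_1\to\exists z\,I^*_z$ is immediate from $\models_1 A_1\to I^*$ by instantiation, but $\models_1\exists z\,I^*_z\to B_1$ requires $\models_1 I^*_z\to B_1$ with $z$ free; this is where care is needed, and it holds because the head symbol of $t$ occurs in neither $I^*_z$ nor $B_1$, so it may be interpreted to give $t$ any value, whence validity for the particular term $t$ lifts to validity for all values of $z$ (the symmetric argument handles $\forall z$). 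Iterating leaves an interpolant $I$ whose predicate symbols are already common to $A$ and $B$ (skolemization and expansion do not change predicate symbols) and whose function symbols are now common as well, since the only function symbols shared by $A_1$ and $B_1$ are those shared by $A$ and $B$.

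It then remains to descend through skolemization. The formula $A_1\to I$ is exactly $A\to I$ with the strong quantifiers of the antecedent skolemized — the polarity of a quantifier inside $A$ does not depend on what stands in the succedent — and $I$ contains no Skolem function; hence Lemma~\ref{beta}(2) gives $S_1,\dots,S_k\models_1 A\to I$ for the corresponding Skolem axioms, and Lemma~\ref{beta}(3) gives $\models_1 A\to I$. Symmetrically $\models_1 I\to B$. Thus $I$ is a first-order interpolant in the common predicate and function symbols, so $\mathbf{L}^1(\mathcal{L}^\to)$ interpolates, completing the equivalence.
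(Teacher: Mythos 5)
Your proof follows essentially the same route as the paper's: skolemization via Lemma~\ref{beta}(1), a Herbrand expansion via Theorem~\ref{th.expansion}, propositional interpolation, reintroduction of the weak quantifiers via Lemma~\ref{alpha} together with the instance axioms, elimination of non-common function symbols by replacing maximal (outermost) terms with fresh variables quantified existentially (head symbol absent from the $B$-side) or universally (absent from the $A$-side), and finally descent through the Skolem axioms via Lemma~\ref{beta}(2),(3) --- the case assignments and the order of elimination match the paper exactly, and you even supply justifications (the genericity argument, the remark that polarities inside $A$ do not depend on the succedent) that the paper leaves implicit. One small caveat: your stated reason for the crucial validity-preservation step, namely that the head symbol of $t$ occurs in neither $I^*_z$ nor $B_1$, is literally false when the same non-common symbol heads several distinct maximal terms (replacing all occurrences of $f(\overline{t})$ can leave $f(\overline{s})$ behind), so the reinterpretation argument needs slightly more care there; but this is a step the paper itself asserts without any proof, so your attempt is, if anything, more detailed than the original.
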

\begin{proof}
\textbf{ }

$\Leftarrow$: trivial.

$\Rightarrow$: Assume $A \supset B$ is in the language of $\mathcal{L}^\to$ and $\models A \supset B$. 
\[ \models sk(A) \supset sk(B) \quad \mbox{ by Lemma \ref{beta} 1. } \]
Construct a Herbrand expansion $A_H \supset B_H$ of $sk(A) \supset sk(B)$ by Theorem \ref{th.expansion}.\newline
Construct the propositional interpolant  $I^*$ of $A_H \supset B_H$, 
\[ \models A_H \supset I^* \quad \mbox{and } \quad \models I^* \supset B_H . \]
Use Lemma \ref{alpha} and 
\[ \models A(t) \supset \exists x A(x), \quad \models \forall x A(x) \supset A(t) \]
to obtain
\[ \models sk(A) \supset I^* \quad \mbox{and} \quad \models I^* \supset sk(B) \]
Order all terms $f(t)$ in $I^*$ by inclusion where $f$ is not in the common language. Let $f^{*}(\overline{t})$ be the maximal term.
\begin{enumerate}
\item[i.] $f^*$ is not in $sk(A)$. Replace $f^{*}(\overline{t})$ by a fresh variable $x$ to obtain
\[ \models sk(A) \supset I^{*} \{x / f^{*}(\overline{t})\}. \] 
But then also
\[ \models sk(A) \supset \forall x I^{*} \{x / f^{*}(\overline{t})\} \]
and
\[ \models \forall x I^{*} \{x / f^{*}(\overline{t})\} \supset sk(B) \] 
by
\[ \models \forall x I^{*} \{x / f^{*}(\overline{t})\} \supset I^{*}.\]
\item[ii.] $f^*$ is not in $sk(B)$. Replace $f^{*}(\overline{t})$ by a fresh variable $x$ to obtain
\[ \models I^{*} \{x / f^{*}(\overline{t})\} \supset sk(B). \]
But then also
\[ \models \exists x I^{*} \{x / f^{*}(\overline{t})\} \supset sk(B) \]
and
\[ \models sk(A) \supset \exists x I^{*} \{x / f^{*}(\overline{t})\} \]
by
\[ \models I^* \to \exists x I^{*} \{x \backslash f^{*}(\overline{t})\}.\]
\end{enumerate}
Repeat this procedure till all functions and constants not in the common language (among them the Skolem functions) are eliminated from the middle formula. Let $I$ be the result. $I$ is an interpolant of $sk(A) \supset sk(B)$. By Lemma \ref{beta} 2,3 $I$ is an interpolant of $A \to B$. For a similar construction for classical first-order logic see Chapter 8.2 of \cite{baaz2011methods}.
\end{proof}

\begin{corollary}  \label{cor.1}
If interpolation holds for $\mathbf{L}^0(\mathcal{L}^\to)$, $\quad \models A \to B$ and $A \to B$ contains only weak quantifiers, then there is a quantifier-free weak interpolant with common predicates for $A \to B$.
\end{corollary}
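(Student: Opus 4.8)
The plan is to combine the main interpolation theorem just proved with the expansion machinery of Section~\ref{expansions}, tracking the crucial point that when $A \to B$ contains only weak quantifiers, no skolemization is needed and the propositional interpolant already lives in the right fragment. First I would observe that since $A \to B$ contains only weak quantifiers, positive occurrences of $\forall$ and negative occurrences of $\exists$ are absent, so $sk(A) \equiv A$ and $sk(B) \equiv B$ by the definition of skolemization; in particular no Skolem functions are introduced. By Theorem~\ref{th.expansion} the validity of $A \to B$ yields a Herbrand expansion $A_H \to B_H$, where every quantifier has been replaced by a finite disjunction or conjunction of instances $A(t_i)$. Crucially $A_H \to B_H$ is quantifier-free and propositionally valid.

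Next, since interpolation holds for $\mathbf{L}^0(\mathcal{L}^\to)$, I would extract a propositional interpolant $I^*$ with $\models A_H \supset I^*$ and $\models I^* \supset B_H$, where the atoms of $I^*$ are common to $A_H$ and $B_H$. Because the expansion instances of a predicate $P$ appearing only in $A$ (respectively only in $B$) involve only $P$, the common atoms of $A_H$ and $B_H$ are instances of predicates common to $A$ and $B$; hence the predicate symbols occurring in $I^*$ are already common to $A$ and $B$. The formula $I^*$ is quantifier-free by construction, so it is a quantifier-free weak interpolant for the expansions.

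It then remains to lift $I^*$ from the expanded formulas back to $A \to B$ themselves. Using Lemma~\ref{alpha} together with $\models A(t) \supset \exists x A(x)$ and $\models \forall x A(x) \supset A(t)$, I would reintroduce the weak quantifiers: from $\models A_H \supset I^*$ we pass to $\models A \supset I^*$, and from $\models I^* \supset B_H$ we pass to $\models I^* \supset B$, exactly as in the reintroduction step of the main theorem. Since no strong quantifiers or Skolem functions were ever present, the elimination phase of the main proof is vacuous, and $I^*$ itself is the desired interpolant. Setting $I = I^*$ gives a quantifier-free formula in the common predicate language with $\models A \to I$ and $\models I \to B$.

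The main obstacle I anticipate is justifying that the common \emph{atoms} of $A_H$ and $B_H$ correspond only to common \emph{predicates} of $A$ and $B$, rather than accidentally sharing a closed term that forces a non-common predicate into $I^*$. The point to verify carefully is that the interpolation theorem for $\mathbf{L}^0$ only guarantees common atoms, and one must check that these atoms are built from predicate symbols genuinely shared by $A$ and $B$; this follows because an atom $P(\overline{t})$ occurring in both $A_H$ and $B_H$ forces $P$ to occur in both $A$ and $B$, the instantiation terms being irrelevant to which predicate symbol appears. Once this is pinned down, the argument is a direct specialization of the main theorem with the strong-quantifier machinery switched off.
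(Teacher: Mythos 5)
Your proposal is correct and follows essentially the paper's intended argument: the corollary is exactly the main interpolation theorem's proof specialized to the case where skolemization is trivial ($sk(A)=A$, $sk(B)=B$), taking the propositional interpolant $I^*$ of the Herbrand expansion and reintroducing the weak quantifiers via Lemma \ref{alpha}, so that $I^*$ itself serves as the interpolant. One small imprecision: the elimination phase is not ``vacuous'' --- $I^*$ may well contain function symbols or constants not common to $A$ and $B$ (this is precisely why the statement cannot be strengthened, as the remark following the corollary explains) --- but you are right to skip it, since weak interpolation only requires common \emph{predicate} symbols and performing the elimination would reintroduce quantifiers.
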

\begin{remark}
Corollary \ref{cor.1} cannot be strengthened to provide a quantifier-free interpolant with common predicate symbols and common function symbols for $A \to B$. Consider 
\[ Q_\forall A(x_1 , f_1(x_1), x_2 , f_2 (x_1 , x_2) , \ldots ) \to Q_\exists A(g_1 , y_1 , g_2 (y_1), y_2 , g_3 (y_1 , y_2), \ldots), \]
where $Q_\forall = \forall x_1 \forall x_2  \ldots$ and $Q_\exists = \exists y_1 \exists y_2 \ldots$. This is the skolemization of
\[ \forall x_1 \exists x'_1 \forall x_2 \ldots A(x_1 , x'_1, x_2 , \ldots ) \to \forall x_1 \exists x'_1 \forall x_2 \ldots A(x_1 , x'_1, x_2 , \ldots ),\]
where $\forall x_1 \exists x'_1 \forall x_2 \exists x'_2 \ldots A(x_1 , x'_1, x_2 , x'_2, \ldots )$ is the only possible interpolant modulo provable equivalence with common predicate and function symbols.
\end{remark}

\begin{example} \label{ex.interpolation} Example \ref{ex.running} continued. For the given logic we calculate the interpolant for
\[ \exists x (B(x) \land \forall y C(y)) \to \exists x (A(x) \lor B(x)). \]
\begin{enumerate}
\item Skolemization
\[ \bigvee_{i=1}^{5}(B(c_i) \land \forall y C(y)) \to \exists x (A(x) \lor B(x)). \]
\item Herbrand expansion
\[ \bigvee_{i=1}^{5}(B(c_i) \land C(c_1 )) \to \bigvee_{i=1}^{5} (A(c_i ) \lor B(c_i )). \]
\item Propositional interpolant
\[ \bigvee_{i=1}^{5}(B(c_i) \land C(c_1 )) \to \bigvee_{i=1}^{5}B(c_i ) \quad \bigvee_{i=1}^{5}B(c_i ) \to \bigvee_{i=1}^{5}(A(c_i ) \lor B(c_i )). \]
\item Back to the Skolem form
\[ \bigvee_{i=1}^{5}(B(c_i) \land \forall y C(y)) \to \bigvee_{i=1}^{5}B(c_i ) \quad \bigvee_{i=1}^{5}B(c_i ) \to \exists x (A(x) \lor B(x)). \]
\item Elimination of function symbols and constants not in the common language from $\bigvee_{i=1}^{5}B(c_i )$. Result:
\[ \exists z_1 \ldots \exists z_5 \bigvee B(z_i ). \]
\item Use the Skolem axiom
\[ \exists x (B(x) \land \forall y C(y)) \to \bigvee_{i=1}^{5} B(c_i ) \land \forall y C(y) \]
to reconstruct the original first-order form.
\item The Skolem axiom can be deleted.
\end{enumerate}
\end{example}

\begin{example} \label{ex.10}
$\mathcal{L}^\to = \langle \{0,1\}, \leq, \lor, \land, \to \rangle$ be the lattice of classical logic.
\[ \mbox{SPECTRUM}(\mathcal{L}^\to) = \{\{0\}, \{1\}, \{0,1\}\} \]
This is the maximal possible spectrum by Proposition \ref{prop.c} $1$.

Let $\mathcal{L'}^\to$ and $\mathcal{L''}^\to$ be the extension of $\mathcal{L}^\to$ by $\{\overline{0}\}$ and $\{\overline{0}, \overline{1}\}$, respectively. $\mathbf{L^1}(\mathcal{L'}^\to)$ and $\mathbf{L^1}(\mathcal{L''}^\to)$ interpolate as all truth constants are representable by closed formulas. This is Craig's result, which does however not cover $\mathbf{L^1}(\mathcal{L'''}^\to)$, where $\mathcal{L'''}^\to$ is the extension of $\mathcal{L}^\to$ with $\{\overline{1}\}$. We have only to show that $\mathbf{L^0}(\mathcal{L'''}^\to)$ interpolates.

First note in general that 
\[ \bigvee_i E_i \to \bigwedge_j F_j \]
interpolates iff there are interpolants
\[ E_i \to I_{ij} \quad I_{ij} \to F_j. \]
$\bigwedge_j \bigvee_i I_{ij}$ is a suitable interpolant. Now use the value preserving transformations
\[ D(A \land B \to C) \quad \Leftrightarrow \quad D(A \to C \lor B \to C) \]
\[ D(A \lor B \to C) \quad \Leftrightarrow \quad D(A \to C \land B \to C) \]
\[ D((A \to B) \to C) \quad \Leftrightarrow \quad D(C \lor (A \land (B \to C))) \]
\[ D(x) \quad \Leftrightarrow \quad D(\top \to x) \]
for variables $x$ together with distributions and simplifications, to reduce the problem to 
\[ \bigwedge_i (u_i \to v_i ) \to \bigvee_j (s_j \to t_j ) \]
$v_i , t_j$ variables, $u_i , s_j$ variables or $\top$. We assume that the succedent is not valid (otherwise $\top$ is the interpolant). So any variable occurs either in the $s_j$ group or in the $t_j$ group. Close the antecedents under transitivity of $\to$. There is a common implication $u \to v$, an interpolant (Otherwise there is a counter valuation by assigning $0$ to all $t_j$ and extending this assignment in the antecedent such that if $v_i$ is assigned $0$ also $u_i$ is assigned $0$. No $s_j$ is assigned $0$ by this procedure. Assign $1$ to all other variables and derive a contradiction to the assumption, that the initial implication is valid). Therefore, $\mathbf{L^1}(\mathcal{L'''}^\to)$ interpolates.
\end{example}

\begin{example} $n$-valued G\"odel logics. \\
A finitely-valued G\"odel logic extended by constants interpolates if there are no consecutive two truth values different to $0,1$ not expressible by closed terms (see Theorem $11$ in \cite{DBLP:journals/aml/BaazV99}). Note that $\overline{0}$ is able to express $\overline{1}$ and vice versa but no other truth constant is expressible by any other truth constants.

Let $\mathcal{G}_n = \langle W_n , \lor, \land, \to, \overline{0} \rangle$, where $\overline{0} = 0$ and $W_n = \{ 0, \frac{1}{n-1}, \ldots, \frac{n-2}{n-1}, 1 \}$.
\[ u \to v = \begin{cases} 1 \quad u \leq v \\ v \quad \mbox{else} \end{cases} \]
Let $T_n$ be the set of non-empty subsets of $W_n$ not containing $0,1$ such that besides $0,1$ no two consecutive truth values lack.
$$ \mbox{SPECTRUM}(\mathcal{G}_n) = \{ \Gamma \ | \ \Delta \leq \Gamma \mbox{ for some } \Delta \in T_n\}.$$
\end{example}

\begin{example} Finitely-valued Łukasiewicz logic.\\
Let Ł$_n = \langle W_n, \lor, \land, \to, \overline{0} \rangle$, where $W_n = \{ a_1, \ldots, a_n \}$, $a_i \leq a_j \Leftrightarrow i \leq j$, $\overline{0} = a_1$. Note that Ł$_n$ interpolates iff all truth values are representable (see Theorem $17$ in \cite{DBLP:journals/aml/BaazV99}). The first-order SPECTRUM is given by the propositional SPECTRUM.
Let $T_n$ be the set of non-empty subsets of $W_n$ such that the greatest common divisor of the indices of elements of the set is $1$. The SPECTRUM of Ł$_n$ is the set of all supersets of $T_n$. 
\end{example}

\section{Conclusion}\label{conclusion}
Extending the notion of expansion to formulas containing strong quantifiers might be possible to cover logics which do not admit skolemization, e.g. logics based on non-constant domain Kripke frames (such notions of expansion are in the spirit of Herbrand's original proof of Herbrand's Theorem). \cite{DBLP:conf/frocos/BaazL17} contains an application to the prenex fragment of first-order G\"odel logic.

Another possibility is to develop unusual skolemizations e.g. based on existence assumptions \cite{baaz2006skolemization} or on the addition of Skolem predicates instead of Skolem functions as in \cite{godel1930vollstandigkeit}.

The methodology of this paper can also be used to obtain negative results. First-order $S5$ does not interpolate by a well-known result of Fine \cite{fine1979failures}. As propositional $S5$ interpolates, first-order $S5$ cannot admit skolemization together with expansions in general.

\bibliographystyle{plain}
\bibliography{refs}

\end{document}